\documentclass{article}

\PassOptionsToPackage{numbers,compress}{natbib}
 \usepackage[preprint]{neurips_2026}

\usepackage{amsmath}
\usepackage[utf8]{inputenc} 
\usepackage[T1]{fontenc}    
\usepackage{url}            
\usepackage{booktabs}       
\usepackage{amsfonts}       
\usepackage{nicefrac}       
\usepackage{microtype}      
\usepackage{xcolor}         
\usepackage{amsthm}
 \usepackage{stmaryrd} 
\usepackage{tkz-tab}
\usepackage{listings}
\usepackage{caption}  
\usepackage{hyperref}
\usepackage[capitalize]{cleveref}
\usepackage{subcaption} 
\usepackage{todonotes}
\usepackage{graphicx}

\newtheorem{theorem}{Theorem}[section]
\newtheorem{proposition}{Proposition}[section]
\newtheorem{definition}{Definition}[section]
\newtheorem{remark}{Remark}[section]

\newcommand{\N}{\mathbb{N}}

\newcommand{\R}{\mathbb{R}}
\renewcommand{\S}{\mathbb{S}}

\newcommand{\Om}{\Omega}
\newcommand{\dOm}{{\partial \Omega}}

\newcommand{\eps}{\varepsilon}
\newcommand{\Vol}{\text{Vol}}
\newcommand{\Per}{\text{Per}}
\newcommand{\Jac}{\text{Jac}}

\DeclareMathOperator{\tr}{tr}

\title{Parametrizing Convex Sets Using Sublinear Neural Networks}

\author{%
  Eloi Martinet \\
  Institute for Mathematics,\\
  University of Würzburg\\
  Germany \\
  \texttt{eloi.martinet@uni-wuerzburg.de} \\
}

\begin{document}

\maketitle

\begin{abstract}
  We propose a neural parameterization of convex sets by learning sublinear (positively homogeneous and convex) functions. Our networks implicitly represent both the support and gauge functions of a convex body. We prove a universal approximation theorem for convex sets under this parametrization. Empirically, we demonstrate the method on shape optimization and inverse design tasks, achieving accurate reconstruction of target shapes. 
\end{abstract}

\section{Introduction}

\subsection{Convex Shape Optimization}

Shape optimization under convexity constraints has been extensively studied due to its analytical tractability and practical relevance \cite{buttazzo1997shape}. Early numerical approaches enforced convexity through penalization, notably via the distance to the convex hull \cite{Oudet2004}. Alternative formulations characterized convex sets as intersections of half-spaces \cite{Lachand-Robert2006Jul}, which guarantee convexity but limit the representation of smooth geometries.

A widely adopted paradigm relies on functional parametrization using support or gauge functions \cite{Oudet2013Mar}. These approaches enable compact representations and have been successfully applied in various contexts \cite{Antunes2022May,Bayen2012Dec,Bogosel2023Feb2,Bogosel2024Nov,Ftouhi2025Jun}. However, convexity is encoded through second-order differential inequalities, leading to constrained optimization problems that are difficult to handle, especially in higher dimensions. While exact enforcement is possible in two dimensions \cite{Bayen2012Dec,Bogosel2023Feb2}, extensions to three dimensions always rely on relaxations that may compromise convexity \cite{Antunes2022May} \cite{lamberg2001numerical}.

Geometric approaches provide an alternative by directly constraining admissible deformations. For instance, \cite{Bartels2020Apr} proposes a triangulation-based framework ensuring convexity preservation through restrictions on the deformation field, while \cite{chakib2024improved} makes use of Minkowski deformations.

\subsection{Neural Representations of Convex Sets}

In the last years, a significant amount of work has been devoted to interfacing or replacing classical topology optimization methods with neural networks, parametrizing the shape using a neural network or making use of Physics Informed Neural Networks (PINNs) to solve the underlying state equation (see e.g. \cite{shin2023topology} for a review). Recent advances in machine learning have introduced flexible neural representations of shapes with built-in properties. For instance, \cite{BelieresFrendo2025Apr} enforces a volume constraint using SympNets; convexity can be enforced at the level of level-set functions \cite{Martinet2025May} using for instance input-convex neural networks (ICNNs) \cite{Amos2017ICNN}.

Beyond shape optimization, convex shape priors have been used as an inductive bias in image segmentation \cite{liu2025convex} or for convex decomposition through diffuse interface formulations such as phase-field models \cite{Deng2019Sep}. In \cite{tvetkova2025convex}, it is shown that convexity is an important property of the latent representation of decision regions.

\subsection{Main contributions}

Despite these advances, classical approaches faces significant challenges, like exact enforcement of constraints (especially in three dimensions) or flexibility when it comes to the computation of shape-related quantities. On the other hand, neural approaches often rely on implicit parametrization that makes the computation of boundary quantities harder.

In contrast, our approach aims to combine the strengths of these paradigms by providing a representation that:
(i) ensures exact convexity without constraints,
(ii) can represent any convex set,
(iii) allows for simple and precise computation of higher-order geometric terms
and (iv) is able to seamlessly treat different dimensions. We demonstrate these claims on various inverse and shape optimization problems, like shape reconstruction or the maximization of the torsional rigidity of a rod.

From a machine learning perspective, our approach can be interpreted as defining a hypothesis class of neural networks that encode convexity exactly by construction. In contrast to ICNNs, which enforce convexity of scalar functions with respect to inputs, our architecture directly parameterizes convex bodies. This provides an inductive bias tailored to geometric learning problems, enabling both expressive representations and efficient computation of shape-dependent quantities.

\section{Parametrization of convex sets using neural networks}

\begin{figure}
    \centering
    \includegraphics[width=0.85\linewidth]{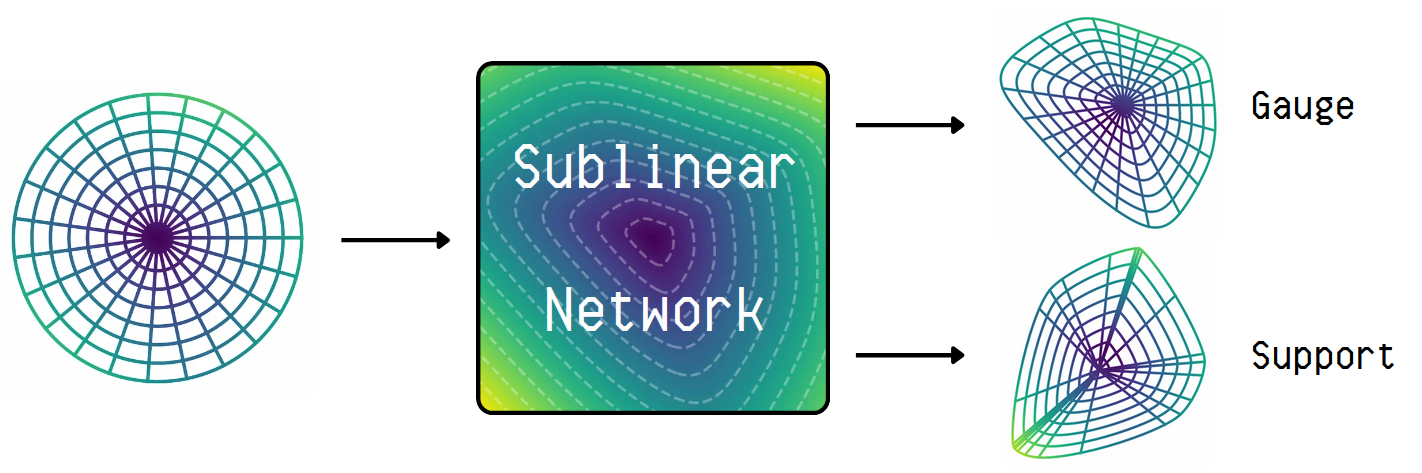}
    \caption{A sublinear network can represent both the gauge and support functions of convex sets, and allow to parameterize smooth bijections that sends the ball on convex sets.}
    \label{fig:illustration}
\end{figure}

\subsection{Gauge and support functions}

In what follows, $\mathcal{K}$ represents the set of convex bodies in $\R^d$ (i.e. compact convex sets with non-empty interior \cite{schneider}) which contains 0 in their interior. Commonly associated with a convex set $\Om \in \mathcal K$ are the gauge and support functions, respectively defined as
\[
    g_\Om(u) := \inf \left\{\lambda \geq 0 : u \in \lambda \Om \right\} 
    \qquad \text{and} \qquad
    h_\Om(u) := \sup_{x \in \Om} x \cdot u
\]
for $u\in\R^d$. It can readily be seen that both of these functions are \textit{sublinear}, i.e. subadditive and positively homogeneous. Reciprocally, if $f : \R^d \to \R$ is a sublinear function, then it is respectively the gauge and support function of the convex sets (see \cite[Theorem 1.7.1]{schneider})
\begin{equation}
    \label{eq:convex_from_func}
    \left\{ x \in \R^d : f(x) \leq 1 \right\}
    \qquad \text{ and } \qquad
    \left\{ x \in \R^d : x \cdot y \leq f(y) \text{ for all } y \in \R^d \right\}
\end{equation}

We can use the gauge function of a convex $\Om$ to give an explicit parametrization of $\Om$ as the image of the unit ball $B \subset \R^d$ in the following way:
\begin{proposition} 
    \label{prop:gauge}
    Let $g$ be a positive and sublinear function. The function $\phi:x\longmapsto \frac{\|x\|}{g(x)}x$ (extended by $0$ at the origin) is a homeomorphism from $B$ to $\phi(B)$ and the image set $\phi(B)$ is convex with gauge function $g$. 
\end{proposition}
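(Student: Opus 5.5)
The plan is to show that $\phi$ maps each ray from the origin onto itself through the radial rescaling $r\mapsto r^2/g(\theta)$, and to read off everything else from this. Here "positive" means $g(x)>0$ for $x\neq 0$.

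First, regularity of $g$. A sublinear function on $\R^d$ is convex and finite, hence continuous. By compactness of the unit sphere $\S^{d-1}$ and positivity, there are constants $0<m\le M$ with $m\|x\|\le g(x)\le M\|x\|$ for all $x$, using positive homogeneity. This gives continuity of $\phi$ on $B\setminus\{0\}$, and at the origin $\|\phi(x)\|=\|x\|^2/g(x)\le \|x\|/m\to 0$. So $\phi$ is continuous on $B$.

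Second, the polar description. Write $x=r\theta$ with $r=\|x\|\in(0,1]$ and $\theta\in\S^{d-1}$. Homogeneity gives $\phi(r\theta)=\frac{r^2}{g(\theta)}\theta$. So $\phi$ preserves each ray and acts on the radius by $r\mapsto r^2/g(\theta)$, which is a strictly increasing bijection of $[0,1]$ onto $[0,1/g(\theta)]$.
\begin{itemize}
\item Injectivity follows, because distinct rays go to distinct rays and $\phi$ is injective on each ray. The origin is the only point sent to $0$.
\item The image is $\phi(B)=\{s\theta: \theta\in\S^{d-1},\ 0\le s\le 1/g(\theta)\}$. By homogeneity, $g(s\theta)\le 1$ exactly when $s\le 1/g(\theta)$, so $\phi(B)=\{y: g(y)\le 1\}$.
\end{itemize}
By \eqref{eq:convex_from_func} (Schneider, Theorem 1.7.1), this sublevel set is a convex body with gauge function $g$. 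Directly: it is convex because $g$ is convex, closed because $g$ is continuous, bounded because $g\ge m\|\cdot\|$, and it contains the ball of radius $1/M$ around the origin.

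Third, the homeomorphism. $\phi$ is a continuous bijection from the compact space $B$ onto the Hausdorff space $\phi(B)$, so it is automatically a homeomorphism. Alternatively, one can write the inverse explicitly as $\psi(y)=\sqrt{g(y)/\|y\|}\,y$ for $y\neq 0$, with $\psi(0)=0$. Its continuity at $0$ follows from $\|\psi(y)\|\le\sqrt{M\|y\|}$.

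The main obstacle is only the behaviour at the origin, where $\phi$ is not given by the formula. This is handled by the two-sided bound $m\|x\|\le g(x)\le M\|x\|$, so that step is where I would be careful. Everything else is the radial computation above.
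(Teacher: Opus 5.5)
\textbf{An error in the radial formula, harmless to the main argument.} Your overall strategy works, but the polar formula is wrong. Since $g(r\theta)=r\,g(\theta)$, the factor $\|x\|/g(x)$ is $0$-homogeneous. Hence
\[
\phi(r\theta)=\frac{r}{r\,g(\theta)}\,r\theta=\frac{r}{g(\theta)}\,\theta ,
\]
not $\frac{r^2}{g(\theta)}\theta$, so $\phi$ is itself positively $1$-homogeneous. For example, $g=2\|\cdot\|$ gives $\phi(x)=x/2$. Your own continuity step already had this right, because $\|x\|^2/g(x)=r/g(\theta)$.

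Your later deductions only use that $\phi$ preserves rays and that the radial map is a strictly increasing bijection of $[0,1]$ onto $[0,1/g(\theta)]$. That is equally true of $r\mapsto r/g(\theta)$. So injectivity, the identification $\phi(B)=\{g\le 1\}$, the convex-body properties, and the compact-to-Hausdorff homeomorphism argument all survive unchanged.

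Your alternative explicit inverse does need fixing. The map $\sqrt{g(y)/\|y\|}\,y$ does not invert $\phi$: for $g=2\|\cdot\|$ it gives $\sqrt{2}\,y$ instead of $2y$. The correct inverse is $\psi(y)=\frac{g(y)}{\|y\|}\,y$ with $\psi(0)=0$. Its continuity at $0$ follows from $\|\psi(y)\|=g(y)\le M\|y\|$.

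\textbf{Comparison with the paper.} With these corrections your proof is complete, and it takes a somewhat different route. The paper does not use polar coordinates. It uses the identity $g(\phi(x))=\frac{\|x\|}{g(x)}\,g(x)=\|x\|\le 1$ to get $\phi(B)\subset\{g\le 1\}$ immediately. It then exhibits the inverse $y\mapsto \frac{g(y)}{\|y\|}y$ to obtain the reverse inclusion and bijectivity. It asserts that $\phi$ is a homeomorphism without checking continuity, in particular at the origin, where both $\phi$ and its inverse are defined only by extension.

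Your argument supplies exactly that missing topological bookkeeping, through the two-sided bound $m\|x\|\le g(x)\le M\|x\|$ and the compact-Hausdorff argument. It also verifies directly that $\{g\le 1\}$ is a convex body. The paper's identity $g(\phi(x))=\|x\|$ is the more economical way to identify the image. It would also have caught your slip, since your formula would give $g(\phi(x))=\|x\|^2$ instead.
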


\begin{proof}
    Let $\Om := \left\{ y \in \R^d : g(y) \leq 1 \right\}$. For $x \in B$, we have $g(\phi(x)) = \frac{\|x\|}{g(x)} g(x) = \|x\| \leq 1$. Hence, $\phi(B) \subset \Om$. On the other hand, if $y \in K$, then $x := \phi^{-1}(y)=\frac{g(y)}{\|y\|}y \in B$ and $y = \phi(x)$. Therefore, $\phi(B)=\Om$ is convex and $\phi$ is a homeomorphism. 
\end{proof}

Using \cite[Corollary 1.7.3]{schneider}, we know that if $\Om \in \mathcal K$ is such that $h_\Om$ is differentiable on $\R^d \setminus \{0\}$, then $\nabla h_\Om : \S^{n-1} \to \partial \Om$ is a homeomorphism. This leads to the following proposition:
\begin{proposition}
    \label{prop:support}
    Let $h$ be a positive and sublinear function, differentiable on $\R^d \setminus \{0\}$. The function $\phi:x\longmapsto \|x\|\nabla h(x)$ (extended by $0$ at the origin) is bijective from $B$ to $\phi(B)$ and the image set $\phi(B)$ is convex with support function $h$. 
\end{proposition}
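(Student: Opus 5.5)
Let $\Om := \{x \in \R^d : x\cdot y \leq h(y) \text{ for all } y\}$ be the convex set associated with $h$ by \eqref{eq:convex_from_func}. By \cite[Theorem 1.7.1]{schneider}, $h = h_\Om$. Since $h$ is positive on $\R^d\setminus\{0\}$ and continuous, it is bounded below by some $c>0$ on the sphere, so $cB \subset \Om$. Hence $\Om \in \mathcal K$ with $0$ in its interior. The plan is to show $\phi(B) = \Om$ by analysing $\phi$ ray by ray. We use two facts. First, positive homogeneity of $h$ makes $\nabla h$ $0$-homogeneous on $\R^d\setminus\{0\}$, so $\nabla h(x) = \nabla h(x/\|x\|)$. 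Second, by \cite[Corollary 1.7.3]{schneider}, the map $u \mapsto \nabla h(u)$ is a homeomorphism from $\S^{d-1}$ onto $\dOm$. For $x = r u$ with $r = \|x\| \in (0,1]$ and $u \in \S^{d-1}$, this gives
\[
\phi(ru) = r\,\nabla h(u).
\]
So $\phi$ maps the sphere of radius $r$ onto $r\,\dOm$, and $\phi(0)=0$.

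\textbf{Surjectivity onto $\Om$.} Take $y \in \Om\setminus\{0\}$. Since $0$ is interior and $\Om$ is compact convex, the gauge $g_\Om(y) \in (0,1]$ and $y/g_\Om(y) \in \dOm$. By the corollary there is a unique $u \in \S^{d-1}$ with $\nabla h(u) = y/g_\Om(y)$. Setting $r = g_\Om(y)$ gives $\phi(ru) = y$. Conversely, $\phi(ru) = r\nabla h(u) \in r\,\dOm \subset \Om$ by convexity and $0 \in \Om$. Hence $\phi(B) = \Om$, which is convex with support function $h$.

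\textbf{Injectivity.} Suppose $\phi(ru) = \phi(sv) \neq 0$. The point $r\nabla h(u)$ has gauge exactly $r$, because $g_\Om = 1$ on $\dOm$ and $g_\Om$ is $1$-homogeneous. Similarly $s\nabla h(v)$ has gauge $s$, so $r = s$. Then $\nabla h(u) = \nabla h(v)$, and injectivity of $\nabla h$ on the sphere gives $u = v$. The origin is the only preimage of $0$, since $\nabla h(u) \in \dOm$ and $0 \notin \dOm$.

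\textbf{Main obstacle.} The key step is the input from \cite[Corollary 1.7.3]{schneider}, in particular the injectivity of $\nabla h$ on $\S^{d-1}$. Differentiability of $h_\Om$ only says that each face $F(\Om,u)$ is a single point, namely $\nabla h(u)$. Two distinct normals could still share the same boundary point, for example at a vertex. In that case $\nabla h$ maps the sphere onto $\dOm$ but is not injective, and bijectivity of $\phi$ would fail. So I would state the corollary precisely and check whether it really gives a homeomorphism, which would also require strict convexity of $\Om$. If it only gives surjectivity onto $\dOm$, I would add strict convexity (equivalently, $C^1$ regularity of $g_\Om$) as a hypothesis. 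Alternatively, I would weaken the claim to: $\phi$ is a continuous surjection of $B$ onto $\Om$, with $\phi(B)$ convex with support function $h$. The surjectivity part above only uses that $\nabla h(\S^{d-1}) = \dOm$. That identity follows from differentiability: every boundary point has some outer normal $u$, and $F(\Om,u) = \{\nabla h(u)\}$.
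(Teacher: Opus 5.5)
You follow the paper's proof step for step. In both, $\phi(B)\subset\Om$ because $\phi(ru)=r\nabla h(u)$ lies on the segment from $0$ to $\nabla h(u)\in\dOm$, and $\Om\subset\phi(B)$ by pushing $y$ radially onto $\dOm$. Injectivity then comes from first matching radii (your gauge argument is the paper's ``every half-line from $0$ meets $\dOm$ exactly once'') and then using injectivity of $\nabla h$ on $\S^{d-1}$, which the paper also takes from \cite[Corollary 1.7.3]{schneider}.

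Your doubt about that last input is justified, and it is a defect of the proposition and of the paper's proof, not only of your write-up. The cited corollary only says what you paraphrase: $h_\Om$ is differentiable at $u$ if and only if $F(\Om,u)$ is a single point, namely $\nabla h_\Om(u)$. Differentiability on $\R^d\setminus\{0\}$ therefore gives a continuous surjection $\S^{d-1}\to\dOm$, not a homeomorphism. Concretely, take the lens $\Om=\{x\in\R^2:\|x-\tfrac{1}{2}e_2\|\le 1,\ \|x+\tfrac{1}{2}e_2\|\le 1\}$. It has $0$ in its interior and no segment in its boundary, so $h_\Om$ is positive, sublinear and differentiable off the origin. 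Yet every $u=(\cos t,\sin t)$ with $|t|\le\pi/6$ is an outer normal at the corner $(\tfrac{\sqrt3}{2},0)$, so $\phi(u)=(\tfrac{\sqrt3}{2},0)$ for all of them. The bijectivity claim is thus false as stated. What your argument (and the paper's) does prove is your weakened version: $\phi$ is a continuous surjection of $B$ onto $\Om$, and $\phi(B)=\Om$ is convex with support function $h$. This is also all that the support-function case of \cref{thm:uat} uses.

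Your proposed repair needs one correction. Adding strict convexity of $\Om$ changes nothing, because differentiability of $h$ on $\R^d\setminus\{0\}$ is already equivalent to strict convexity: all support sets are points exactly when $\dOm$ contains no segment, and the lens is strictly convex. The failure comes from corners, i.e.\ from lack of smoothness. The hypothesis to add is that $\Om$ is smooth, meaning each $y\in\dOm$ has a unique outer unit normal $\nu_\Om(y)$. Your parenthetical does name the right condition, but it is equivalent to smoothness, not to strict convexity: $g_\Om=h_{\Om^\circ}$ is differentiable off $0$ if and only if $\Om^\circ$ is strictly convex, if and only if $\Om$ is smooth.

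Under this hypothesis your injectivity step closes at once. If $\nabla h(u)=\nabla h(v)=y$, then $u$ and $v$ are both outer unit normals at $y$, so $u=v=\nu_\Om(y)$. Hence $\nabla h|_{\S^{d-1}}$ is the inverse of the Gauss map, and $\phi$ is a continuous bijection of the compact set $B$ onto $\Om$, hence a homeomorphism.
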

\begin{proof}
    According to the assumptions, there exists $\Om \in \mathcal{K}$ such that $h = h_\Om$. Let $y \in \tilde \Om := \phi(B)$. Then there exists $x \in B$ such that $y= \|x\| \nabla h_\Om(x) \in \text{Conv}(\dOm \cup \{0\})$; indeed, since $0 = \phi(0) \in \tilde \Om$ and $\|x\| \leq 1$, y is a convex combination of $0$ and $\nabla h_\Om(x) \in \partial \Om$ (here, we used that $\nabla h_\Om$ is $0$-homogeneous). Since $K$ is a compact convex set, $\text{Conv}(\partial K \cup \{0\}) = \Om$ and hence $\tilde \Om \subset \Om$. On the other hand, let $y \in \Om$. Since $0$ is in the interior for $\Om$, there exists $\lambda \geq 1$ such that $\tilde y := \lambda y \in \partial \Om$. Hence, there exists $\tilde x \in \partial B$ s.t. $\tilde y = \nabla h_\Om(\tilde x)$. By taking $x = \frac{1}{\lambda} \tilde x \in B$, we have $y = \|x\| \nabla h(x)$. Hence $\tilde \Om = \Om$, meaning that $\phi(B)$ is convex.
    
    One must now prove the injectivity of $\phi$. Let $x_1, x_2 \in \R^d \setminus \{0\}$ be such that $\phi(x_1) = \phi(x_2)$. Hence, $\nabla h(x_1)$ and $\nabla h(x_2)$ are positively colinear. Moreover, $\nabla h(x_1), \nabla h(x_2) \in \partial \Om$. However, since $h>0$, we have that $0 \in \textit{int}(\Om)$. Hence, any half line originating at $0$ must intersect $\partial \Om$ exactly once (by convexity), implying that $\nabla h(x_1) = \nabla h(x_2)$ and further $\|x_1\| = \|x_2\|$. Using that $\nabla h : \S^{n-1} \to \partial \Om$ is a homeomorphism, this means that $\frac{x_1}{\|x_1\|} = \frac{x_2}{\|x_2\|}$ and hence $x_1 = x_2$.
\end{proof}

\subsection{Sublinear neural networks}

In what follows, we parametrize convex sets \textit{via} their gauge or support function. To this end, we will construct a neural network $p_\theta : \R^d \to \R$, with parameters $\theta$, that is sublinear by design.

A natural starting point is the MaxOut layer \cite{Goodfellow2013Feb}, defined as $p_\theta(x) = \max_{1 \leq i \leq N} (w_i \cdot x)$, where $w_i \in \R^d$. This choice is motivated by the classical characterization of sublinear functions as point-wise supremum of linear forms. However, this representation is non-smooth, which can be limiting in applications requiring differential quantities such as normals or curvature, or even for the application of \cref{prop:support}. To address this, we could consider to replace the maximum by a smooth approximation, like the LogSumExp (LSE). Unfortunately, the obtained function would not be sublinear anymore. Nevertheless, we are able to recover sublinearity while preserving the trace on the sphere as we will see hereafter. This requires classical tools from convex analysis, that we recall in \cref{seq:convex_analysis} for completeness.

\begin{proposition}
    \label{prop:sublinear_extension}
    Let $f : \R^d \to (-\infty, +\infty]$ be a proper convex and closed function such that $f^* \leq 0$ on its domain. Let $x \in \R^d$ and define
    \[
        h(x) := \|x\|  f\left(\frac{x}{\|x\|}\right)
    \]
    with $h(0) := 0$. Then $h$ is sublinear.
\end{proposition}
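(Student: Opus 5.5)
Positive homogeneity is immediate from the definition: for $\lambda>0$ and $x\neq 0$ we have $h(\lambda x)=\lambda\|x\| f(x/\|x\|)=\lambda h(x)$, and $h(0)=0$ handles $x=0$. The substance is subadditivity, or equivalently convexity of $h$. The plan is to obtain $h$ as the support function of a set built from the conjugate $f^*$. That representation is sublinear for free, since a pointwise supremum of linear forms is sublinear.

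First, because $f$ is proper, convex and closed, Fenchel--Moreau gives $f=f^{**}$, so for every $u\in\R^d$
\[
f(u)=\sup_{y\in \operatorname{dom} f^*}\bigl(u\cdot y - f^*(y)\bigr).
\]
Put $u=x/\|x\|$ with $x\neq0$ and multiply by $\|x\|>0$:
\[
h(x)=\sup_{y\in\operatorname{dom} f^*}\bigl(x\cdot y - \|x\| f^*(y)\bigr)=\sup_{y\in\operatorname{dom} f^*}\bigl(x\cdot y + \|x\|\,(-f^*(y))\bigr).
\]
The hypothesis $f^*\le 0$ on its domain means $-f^*(y)\ge 0$ for every $y$. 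Each map $x\mapsto x\cdot y+\|x\|\,(-f^*(y))$ is therefore a linear form plus a nonnegative multiple of the Euclidean norm, hence convex and positively homogeneous. This is the step where the sign condition is essential: if $f^*(y)>0$, the term $-\|x\|f^*(y)$ would be concave.

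The identity also holds at $x=0$, because every term in the supremum vanishes there and $h(0)=0$ (assuming $\operatorname{dom} f^*\neq\emptyset$, which properness of $f^*$ guarantees). A supremum of convex, positively homogeneous functions is convex and positively homogeneous wherever it is finite, and such a function is subadditive: $h(x+y)=2h\bigl(\tfrac{x+y}{2}\bigr)\le h(x)+h(y)$. One can also write this as $h(x)=\sup_{y,\,\|z\|\le1}\,x\cdot\bigl(y-f^*(y)z\bigr)$, which exhibits $h$ as the support function of $\{y-f^*(y)z:\ y\in\operatorname{dom} f^*,\ \|z\|\le 1\}$.

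The main point needing care is finiteness and the meaning of "sublinear" when $f$ takes the value $+\infty$. If $f$ is $+\infty$ at some point of the sphere, then $h$ is an extended-valued sublinear function, and its convexity is still given by the supremum argument. If the paper intends $h:\R^d\to\R$, I would note that finiteness of $f$ on $\S^{d-1}$ gives finiteness of $h$ everywhere, and $h(x)>-\infty$ since $f$ is proper. The only genuinely non-trivial ingredient is the biconjugation step, which needs closedness of $f$; everything else is elementary.
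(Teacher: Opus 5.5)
Your proof is correct and follows the paper's argument: write $f=f^{**}$ as a supremum over $\text{dom}(f^*)$, multiply by $\|x\|$, and use $f^*\le 0$ to see that each map $x\mapsto x\cdot y-\|x\|f^*(y)$ is convex, so $h$ is convex as a supremum of convex functions. Your additional points tidy up things the paper leaves implicit: the identity at the origin (using that $\text{dom}(f^*)$ is nonempty), the extended-valued case, and the support-function form $h(x)=\sup_{y,\,\|z\|\le 1} x\cdot\bigl(y-f^*(y)z\bigr)$.
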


\begin{proof}
    First, $h$ is positively homogeneous by construction. Hence, sublinearity is equivalent to convexity, and we will show the latter.    
    According \cref{prop:bi-conjugate}, we can write
    \[
        f(x) = f^{**}(x) = \sup_{y \in \text{dom}(f^*)} \left\{ y \cdot x - f^*(y) \right\},
    \]
    which leads to $h(x) = \sup_{y \in \text{dom}(f^*)} \left\{ y \cdot x - \|x\| f^*(y) \right\}$. Since $f^* \leq 0$ on its domain, the function $x \mapsto  y \cdot x - \|x\| f(y)$ is convex, implying that $h$ is convex as the supremum of a family of convex functions.
\end{proof}

Using the previous proposition, we can finally define our Neural Network as
\begin{equation}
    \label{eq:lse_net}
    p_\theta(x) := \beta \|x\| \text{LSE}\left(W^T \frac{x}{\|x\|}\right)
    \qquad
    \text{where}
    \qquad
    \text{LSE}(y_1,\dots, y_n) = \log\left(\sum_{i=1}^n e^{y_i}\right).
\end{equation}

\begin{proposition}
    $p_\theta$ as defined in \cref{eq:lse_net} is a sublinear function which is $C^\infty$ on $\R^d \setminus \{0\}$.
\end{proposition}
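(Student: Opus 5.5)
The plan is to apply \cref{prop:sublinear_extension} to the function $f(y) := \beta\,\text{LSE}(W^T y)$, assuming $\beta>0$. With this choice $p_\theta(x) = \|x\| f(x/\|x\|)$ and $p_\theta(0)=0$. The function $f$ is finite everywhere and convex, since $\text{LSE}$ is convex and we compose it with a linear map and multiply by $\beta>0$. It is also continuous, so $f$ is proper, convex and closed. The only nontrivial hypothesis is therefore $f^* \leq 0$ on $\text{dom}(f^*)$, and this is the main step.

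I would use the classical conjugate of $\text{LSE}$: $\text{LSE}^*(z) = \sum_i z_i \log z_i$ if $z$ lies in the simplex $\Delta_n$, and $+\infty$ otherwise. On $\Delta_n$ each term $z_i\log z_i \leq 0$, because $z_i\in[0,1]$. Writing $g := \beta\,\text{LSE}$, scaling gives $g^*(z) = \beta\,\text{LSE}^*(z/\beta)$, so $g^* \leq 0$ on $\beta\Delta_n$ and $g^*=+\infty$ elsewhere.

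Next, for any $v$ of the form $v = Wz$ with $z\in\beta\Delta_n$, and any $y$, we have
\[
v\cdot y - f(y) = z\cdot W^T y - g(W^T y) \leq g^*(z),
\]
and hence $f^*(v) \leq g^*(z) \leq 0$.

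It remains to check that $\text{dom}(f^*) \subset W(\beta\Delta_n) = \beta\,\text{Conv}\{w_1,\dots,w_n\}$. For this I would use the bound $f(y) \leq \beta \max_i w_i\cdot y + \beta\log n$. Its right-hand side is the support function of $\beta\,\text{Conv}\{w_i\}$ plus a constant, so its conjugate is $+\infty$ outside that polytope. Since $f$ is pointwise smaller, its conjugate is pointwise larger, so $f^* = +\infty$ outside the polytope as well. Combining the two facts gives $f^*\leq 0$ on its domain, and \cref{prop:sublinear_extension} yields sublinearity of $p_\theta$.

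An alternative, avoiding conjugates entirely, is to redo the proof of \cref{prop:sublinear_extension} directly. The variational formula for $\text{LSE}$ gives $f(y) = \sup_{z\in\beta\Delta_n}\{(Wz)\cdot y - g^*(z)\}$, and therefore
\[
p_\theta(x) = \sup_{z}\{(Wz)\cdot x - \|x\|\, g^*(z)\},
\]
which is convex because $-g^*\geq 0$. I would mention this as the fallback if the domain argument feels delicate.

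Smoothness on $\R^d\setminus\{0\}$ is routine. There $x\mapsto\|x\|$ and $x\mapsto x/\|x\|$ are $C^\infty$, and $\text{LSE}$ is $C^\infty$ on $\R^n$, so $p_\theta$ is a product and composition of $C^\infty$ maps.
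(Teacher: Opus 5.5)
Your proof is correct and follows the paper's route: you apply \cref{prop:sublinear_extension} to $\beta\,\text{LSE}(W^T\cdot)$ and bound its conjugate by the negative entropy through the linear-composition inequality, which you prove directly via Fenchel--Young. Your additional check that $\text{dom}(f^*) \subset \beta\,\text{Conv}\{w_1,\dots,w_n\}$ is genuinely needed and the paper skips it: its bound $\inf_{Wx=y}(-S)(x) \leq 0$ holds only for $y \in W(\Delta^N)$, since elsewhere the infimum is over an empty set and equals $+\infty$.
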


\begin{proof}
    The regularity assumption is obvious. Since the $\text{LSE}$ is a convex function with values in $\R$, it is proper and closed. The sublinearity can be shown as follows: let $f(x) = \text{LSE}(W^T x)$. According to \cref{prop:sublinear_extension}, it is enough to show that $f^*$ is non positive. According to \cref{prop:entropy} and \cref{prop:composition}, we have that
    \[
        f^*(y) \leq W \triangleright (-S) (y) = \inf_{Wx = y} -S(x) \leq 0
    \]
    where $S$ is the entropy function defined in \cref{prop:entropy}.
\end{proof}

\begin{remark}
    According to the previous derivations, one does not need to restrict to the LSE activation function. Indeed, any convex function such that $f^* \leq 0$ on its domain would also give rise to a sublinear network. The proof of the universal approximation property (given hereafter) however heavily depends on the fact that the LSE approximates the maximum function. 
\end{remark}

\begin{remark}
    One might ask whether this construction can be extended to deeper networks. However, such a generalization is not straightforward, due to the complex interaction between convex conjugation and function composition.
\end{remark}

\subsection{Universal approximation}

According to \cref{prop:gauge} and \cref{prop:support}, the neural networks
\begin{equation}
    \label{eq:gauge_nn}
    \phi_\theta(x) := \frac{\|x\|}{p_\theta(x)} x
\end{equation}
and
\begin{equation}
    \label{eq:support_nn}
    \phi_\theta(x) := \|x\| \nabla p_\theta(x)
\end{equation}
define convex sets by $\Omega_\theta := \phi_\theta(B)$ where $p_\theta$ is respectively the gauge or support function (see \cref{fig:illustration} for an illustration of the action of these maps on the unit ball). However, it is of core interest to know whether every convex set can be represented using this architecture. It turns out that we have the following universal approximation property:
\begin{theorem}
    \label{thm:uat}
    Let $\mathcal{K}^\text{NN} := \left\{ \Om_\theta \subset \R^d : \beta > 0, W \in \R^{d\times m}, m \in \N \right\}$ be the set of convex set that can be represented by \cref{eq:gauge_nn} or \cref{eq:support_nn}. Then $\mathcal{K}^\text{NN}$ is dense in $\mathcal{K}$ with respect to the Hausdorff distance.
\end{theorem}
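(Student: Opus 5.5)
The plan is to approximate in two stages: first by polytopes, whose gauge and support functions are MaxOut functions, and then by smoothing the maximum with a scaled LSE, using the architecture itself to control the smoothing error. Throughout I use that the Hausdorff distance between two convex bodies equals the sup-norm distance of their support functions on $\S^{d-1}$. I also use that for bodies containing a fixed ball $rB$ and contained in $RB$, uniform closeness of gauges on the sphere gives Hausdorff closeness. Indeed, the radial functions $1/g$ are then uniformly close, so each boundary point $x/g(x)$ moves by a controlled amount.

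\emph{Step 1 (polytope approximation).} Fix $\Om\in\mathcal K$ and $\eps>0$. Polytopes containing $0$ in their interior are dense in $\mathcal K$. Choose a polytope $P$ with $d_H(P,\Om)<\eps$ and $0\in\text{int}(P)$. Its support function is $h_P(u)=\max_i v_i\cdot u$ over its vertices $v_i$. Its polar $P^\circ$ is a polytope, so the gauge is $g_P(u)=h_{P^\circ}(u)=\max_j a_j\cdot u$ over the facet normals $a_j$. In both cases the target is $M(u)=\max_{1\le i\le m} w_i\cdot u$, a MaxOut function that is strictly positive on $\S^{d-1}$ because $0$ is interior.

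\emph{Step 2 (LSE smoothing).} Take $\beta=1/t$ and $W=t\,[w_1,\dots,w_m]$ with $t$ large. On the sphere,
\[
M(u)\le p_\theta(u)=\tfrac1t\log\sum_i e^{t\,w_i\cdot u}\le M(u)+\tfrac{\log m}{t}.
\]
So $p_\theta\to M$ uniformly on $\S^{d-1}$. Since both functions are positively homogeneous, the convergence is also uniform on compacts. By the sublinearity proposition, $p_\theta$ is sublinear, $C^\infty$ away from $0$, and positive once $t$ is large.

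For the gauge parametrization \cref{eq:gauge_nn}, Proposition~\ref{prop:gauge} shows that $\Om_\theta$ is the body with gauge $p_\theta$. The uniform gauge estimate, together with $M\ge c>0$ on the sphere, gives $d_H(\Om_\theta,P)\to0$.

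For the support parametrization \cref{eq:support_nn}, Proposition~\ref{prop:support} applies, since $p_\theta$ is differentiable and positive. Hence $\Om_\theta$ has support function $p_\theta$, and
\[
d_H(\Om_\theta,P)=\|p_\theta-h_P\|_{\infty,\S^{d-1}}\le \tfrac{\log m}{t}.
\]
The triangle inequality then gives $d_H(\Om_\theta,\Om)<2\eps$.

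\emph{Main obstacle.} The delicate point is the gauge case: it requires converting sup-norm closeness of gauges into Hausdorff closeness. I would handle it with the bounds
\[
\tfrac{1}{R}\|u\|\le g\le \tfrac1r\|u\|,
\]
valid uniformly for all bodies near $P$, and then estimate $\left|\tfrac{1}{g_1(u)}-\tfrac{1}{g_2(u)}\right|\le R^2\|g_1-g_2\|_\infty$ on the sphere. A secondary check is that the construction of Proposition~\ref{prop:support} yields a convex body at all. This needs $p_\theta>0$ on $\S^{d-1}$, which follows for large $t$ because $p_\theta\ge M\ge c>0$.
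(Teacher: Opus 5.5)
Your proposal is correct and follows essentially the same route as the paper: polytope approximation, the choice $W=\beta^{-1}[w_1,\dots,w_m]$ with $\beta\to0$ so that the LSE converges uniformly to the MaxOut support or gauge function, the identity $d_H=\|h_1-h_2\|_{C(\S^{d-1})}$ in the support case, and the bound $d_H\le\|\rho_1-\rho_2\|_{C(\S^{d-1})}$ with $\rho=1/g$ in the gauge case. One difference is a small improvement: you get the lower bound on $p_\theta$ for free from $p_\theta\ge M$, whereas the paper imposes $\min p_\theta\ge\alpha/2$ through the choice of $\beta$; on the other hand, your phrase ``each boundary point moves by a controlled amount'' should be made precise with the radial-projection argument the paper gives for $d_H(K,L)\le\|\rho_K-\rho_L\|_{C(\S^{d-1})}$.
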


\begin{proof}[Sketch of proof]
    Let $\Om \subset \mathcal K$. We can approximate this set, in the Hausdorff sense, by a polyhedron $ P := \bigcap_{1\leq i\leq m} \left\{x \in \R^d : w_i \cdot x \leq 1 \right\}$. The gauge function of this polyhedron is $g_P(x) = \max_{1 \leq i \leq m} w_i \cdot x$, which can be approximated by a sublinear network $p_\theta$ using the property of the LSE to approximate the maximum function. The corresponding $\Om_\theta$ can be made as close to $P$ as desired, hence close to $\Om$.

    The case of the parametrization by a support function is done in a similar way, by observing that the support function of a polytope $P = \text{Conv}(p_1, \dots, p_n)$ is $h_P(x) = \max\{p_1 \cdot x, \dots, p_n \cdot x\}$.

    The full proof is provided in \cref{sec:proofs}.
\end{proof}

\paragraph{Symmetries}

A natural question is to know whether we can enforce symmetries on the parametrized shapes. In term of group actions, it amounts at asking if, for a certain group of symmetries $G$ we can make $\Om$ \textit{invariant} with respect to the action of $G$, i.e. $g.\Om = \Om$ for all $g \in G$. In our case, we achieve invariance by \textit{frame averaging} \cite{puny2021frame}. Given $G$ a finite subgroup of isometries of $\R^d$ and $p_\theta$ a sublinear network, we define $p_\theta^G(x) := \frac{1}{|G|} \sum_{g \in G} p_\theta(x)$. we have the following proposition:

\begin{proposition}
    \label{prop:symmetries}
    Let $p_\theta^G$ be either the gauge or support function of a convex set $\Om_\theta$. Then $\Om_\theta$ is invariant with respect to $G$.
\end{proposition}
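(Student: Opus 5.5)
We read the definition of $p_\theta^G$ as the frame average $p_\theta^G(x) := \frac{1}{|G|}\sum_{g\in G} p_\theta(g x)$; as printed, the summand does not depend on $g$, which is presumably a typo. We also take the elements of $G$ to be linear isometries, i.e. orthogonal maps fixing the origin. This costs nothing: a finite group of isometries fixes the barycenter of any orbit, and since $0\in\mathrm{int}(\Om)$ is the natural centre, we take that fixed point to be $0$.

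The plan has three steps: show $p_\theta^G$ is admissible, show it is $G$-invariant, then transfer invariance to the set through the uniqueness of the convex body attached to a gauge or support function.

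First, admissibility. For orthogonal $g$, the map $x\mapsto p_\theta(gx)$ is positively homogeneous and subadditive, because $g$ is linear. It is also positive away from $0$ and $C^\infty$ on $\R^d\setminus\{0\}$, because $g$ preserves $\R^d\setminus\{0\}$. A positive average of such functions keeps all these properties. Hence $p_\theta^G$ is a legitimate input to \cref{prop:gauge}, and also to \cref{prop:support}, which requires differentiability.

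Second, invariance of the function. For $k\in G$, the map $g\mapsto gk$ is a bijection of $G$, so reindexing the sum gives
\[
p_\theta^G(kx)=\frac{1}{|G|}\sum_{g\in G}p_\theta(gkx)=\frac{1}{|G|}\sum_{g'\in G}p_\theta(g'x)=p_\theta^G(x).
\]

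Third, transfer to the set. Let $\Om=\Om_\theta$ and $k\in G$. In the gauge case, for linear invertible $k$ one has $g_{k\Om}(x)=\inf\{\lambda\ge 0: k^{-1}x\in\lambda\Om\}=g_\Om(k^{-1}x)$. In the support case, since $k^{T}=k^{-1}$,
\[
h_{k\Om}(x)=\sup_{y\in\Om} ky\cdot x=\sup_{y\in\Om}y\cdot k^{-1}x=h_\Om(k^{-1}x).
\]
Applying the invariance of the second step with $k^{-1}\in G$, in either case the gauge (resp. support) function of $k\Om$ equals $p_\theta^G$, which is that of $\Om$.

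A convex body in $\mathcal K$ is uniquely recovered from its gauge or support function by \cref{eq:convex_from_func}, as a sublevel set or as an intersection of half-spaces. Therefore $k\Om=\Om$ for every $k\in G$.

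The computations are routine. The main obstacle is making the setup precise: correcting the definition of $p_\theta^G$, and justifying that the isometries may be taken linear. Otherwise the identity $h_{k\Om}=h_\Om\circ k^{-1}$ fails; for a translation $\tau_a$ one instead gets $h_{\Om+a}(x)=h_\Om(x)+a\cdot x$, and the argument breaks. If the elements of $G$ are genuinely affine, I would first conjugate by the translation moving the common fixed point to the origin and then apply the argument above.
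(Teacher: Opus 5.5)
Your proof is correct and is essentially the paper's argument. The paper also reindexes the sum to get $p_\theta^G(g x)=p_\theta^G(x)$ and then reads off invariance from the sublevel-set and half-space descriptions in \cref{eq:convex_from_func}, proving $g\Om_\theta\subset\Om_\theta$ directly and using $g^{-1}$ for the reverse inclusion, where you instead go through $g_{k\Om}=g_{\Om}\circ k^{-1}$, $h_{k\Om}=h_\Om\circ k^{-1}$ and uniqueness. Your correction of the definition of $p_\theta^G$ and your restriction to orthogonal maps are exactly what the paper uses tacitly (e.g.\ in $(g.x)\cdot y=x\cdot(g^{-1}.y)$).
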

The proof of this proposition is provided in \cref{sec:proofs}. An experiment making use of the symmetries of both the gauge and support functions can be found in \cref{sec:mahler}.

\section{Computation of shape quantities}

Shape optimization problems, as well as certain inverse problems, require to optimize a certain criteria involving quantities related to the shape, like the volume, perimeter, curvature, etc, for which we can benefit from automatic differentiation. Note that various other quantities are studied in \cite{martinet2026}.

\subsection{Integral quantities}\label{ss:integral_quantities}

We pull back the computations to the reference domain, using change of variables \cite{evans2025measure}:
\[
    \int_{\Om_\theta} f dx
    = \int_B (f \circ \phi_\theta) \Jac(\phi_\theta) dx
    \quad \mbox{ and } \quad
    \int_{\partial \Om_\theta} g d\sigma
    = \int_{\partial B}(g \circ \phi_\theta) \Jac_{\partial B}(\phi_\theta)d\sigma,
\]
where $\Jac(\phi_\theta) = |\det(D\phi_\theta)|$, $\Jac_{\partial B}(\phi_\theta) = \Jac(\phi_\theta)\|\left(D\phi_\theta\right)^{-T} n_B\|$ and $n_B(x) := x$ on $\partial B$ is the unit outward normal vector. For instance, we can compute the volume and perimeter (i.e. surface area) of $\Om_\theta$ as $\Vol(\Om_\theta)= \int_B \Jac \phi_\theta dx$ and $\Per(\Om_\theta) = \int_{\partial B} \Jac_{\partial B}(\phi_\theta)d\sigma$. All the differential quantities are automatically computed using PyTorch \cite{Paszke2019Dec}. The integrals can be discretized using either Monte-Carlo or fixed quadrature points. We chose the latter approach, as we will mainly use L--BFGS as an optimizer, which is known for being sensitive to noise. In particular, we use a Fibonacci lattice approach for the discretizations on the $2$-sphere \cite{Gonzalez2010Jan}.

\subsection{Geometric-differential quantities}


Let $y = \phi_\theta(x)$, $x \in \partial B$. We can express the normal vector at $y \in \partial \Om_\theta$ by 
\[
    n_\theta\left(y \right) = \frac{\left(D\phi_\theta\right)^{-T}(x) n_B(x)}{\left\|\left(D\phi_\theta\right)^{-T}(x) n_B(x)\right\|}.
\]
This naturally defines an extended vector field in $\R^d \setminus \{0\}$. The mean curvature and Gaussian curvature are respectively defined as 
\[
    H_\theta(y) = (d-1)^{-1} \tr S_y
    \qquad \text{and} \qquad
    \kappa_\theta(y) = \det{S_y}
\]
where $S_y$ is the \textit{Weingarten map} at $y$. For more details, see \cref{sec:geo_diff}.

\subsection{PDE-related quantities}

It is common in shape optimization to consider quantities that depends on the solution of a PDE. For instance, optimal design of structures often minimize for the compliance, which is a by-product of the linear elasticity equation; aerodynamic shape optimization needs to solve Navier-Stokes. In this section, we show how it is possible to seamlessly bridge precise and robust mesh free methods with the auto-differentiation of PyTorch in order to easily compute derivatives of PDE-dependent quantities in the simple case of the Poisson equation.

\paragraph{Mesh free Galerkin method:} For $f \in L^2(\Om_\theta)$, the Poisson problem with Dirichlet boundary condition aims at finding $u \in H^1_0(\Om_\theta)$ which satisfies
\begin{equation}
    \label{eq:poisson}
    \begin{cases}
        -\Delta u &=\ \  f\ \ \ \mbox{ in } \Om_\theta,\\
        \ \ \ \ \ u &=\ \  0\ \ \   \mbox{ on } \dOm_\theta.
    \end{cases}
\end{equation}
Passing to the \textit{weak formulation} \cite{evans2022partial} and changing variables, \cref{eq:poisson} can be express in a weak sense as
\[
    \int_{B} A_\theta \nabla u \cdot \nabla v = \int_B (\Jac \phi_\theta) (f \circ \phi_\theta) v,
\]
for $u,v \in H^1_0(B)$, where $A_\theta := (\Jac \phi_\theta) (D\phi_\theta)^{-1} (D\phi_\theta)^{-T}$. The Dirichlet boundary condition is weakly enforced by adding a penalization term with penalty parameter $\alpha>0$. The resulting problem is then discretized on a subspace spanned by radial basis functions (RBFs) $\varphi_i(x) := \psi(|x - x_i|)$, $x_i \in B$, possibly augmented by a polynomial basis \cite{Wendland} leading to a system $K \bar u = \bar f$, where 
\[
    K_{ij} = \int_{B} A_\theta \nabla \varphi_i \cdot \nabla \varphi_j  + \alpha \int_{\partial B} \varphi_i \varphi_j  
    \qquad \text{and} \qquad 
    \bar f_i = \int_B (\Jac \phi_\theta) (f \circ \phi_\theta) \varphi_i.
\]
The integrals are discretized as previously described. We can then solve the previous linear system and recover the solution $u(x) = \sum_i \bar u_i \varphi(x)$.

\begin{remark}
    The RBF-Galerkin method is chosen because of its theoretical convergence guarantees (unlike other meshless methods relying on the strong formulation of the PDE like the Kansa method \cite{fasshauer2007meshfree} or PINN-based methods \cite{dissanayake1994neural}) and is easy to implement in PyTorch.
\end{remark}


\paragraph{Method of fundamental solutions} An important particular case of \cref{eq:poisson} is for $f = 1$, in which case the equation can be solved thanks to a method exhibiting spectral convergence. The details are provided in \cref{seq:mfs_galerkin}.

\paragraph{A note on the use of PINNs}

One may wonder why we do not use PINNs to solve the PDEs, as in related work \cite{BelieresFrendo2025Apr}. Our choice is guided by both practical and structural considerations. On the practical side, recent studies indicate that PINNs do not match the efficiency of classical solvers on several low-dimensional PDEs \cite{grossmann2024can,mcgreivy2024weak}, which is confirmed by our experiments. On the structural side, while \cite{BelieresFrendo2025Apr} relies on a min--min formulation enabling joint optimization, most shape optimization problems are naturally min--max, requiring the inner problem to be (approximately) solved at each step, which increases computational cost.

For completeness, we include a comparison with PINNs in \cref{seq:pinn}, where classical methods remain way more efficient in our setting.

\paragraph{On shape derivatives and the FEM}

Classical approaches to minimizing a shape functional $J$ rely on shape derivatives evaluated on a mesh. Their numerical implementation typically involves additional components such as adjoint states and extension–regularization procedures, which require solving PDEs on the domain. In contrast, our approach avoids these steps by leveraging automatic differentiation.

That said, mesh-based methods remain highly accurate and may be desirable in this context. However, there is currently no standard PyTorch-based framework for finite element computations. We discuss a possible interface with external FEM libraries in \cref{seq:fem}.

\subsection{Which representation should be used?} 

In most cases, the gauge and support functions representations are interchangeable, since computations of shape quantities only rely on the fact that $\phi_\theta$ is a smooth bijection and not on its particular structure. However, there is certain cases where one of the parametrization is preferable, like the gauge parametrization in \cref{subseq:fit} or the support parametrization in \cref{subseq:minkowski}. The gauge parametrization may also be preferred when one needs an explicit inverse, as illustrated in \cref{seq:fem}. Moreover, certain geometric quantities that we do not discuss here are easier to compute with a certain representation (for instance, the \textit{width} is easily computed in the support function parametrization).

\begin{remark}
Notably, the arguments in this section do not rely on the convexity of $\Omega_\theta$ or on any special structure of $\phi_\theta$. This suggests that the framework can be extended beyond the present setting to arbitrary invertible neural networks $\phi_\theta$, which we leave to future work.
\end{remark}

\begin{figure*}[t]
\centering

\setlength{\tabcolsep}{2pt}

\begin{tabular}{c c c c c c}

& & $\sigma=0.0$ & $\sigma=0.01$ & $\sigma=0.05$ & $\sigma=0.1$ \\

&
\quad Reference   \quad \rotatebox{90}{\parbox{2cm}{\centering Samples}} &
\includegraphics[width=0.13\linewidth]{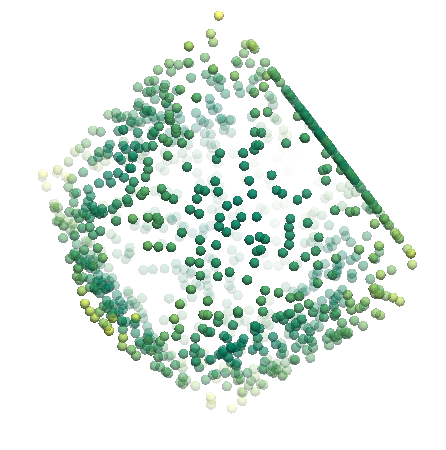} &
\includegraphics[width=0.13\linewidth]{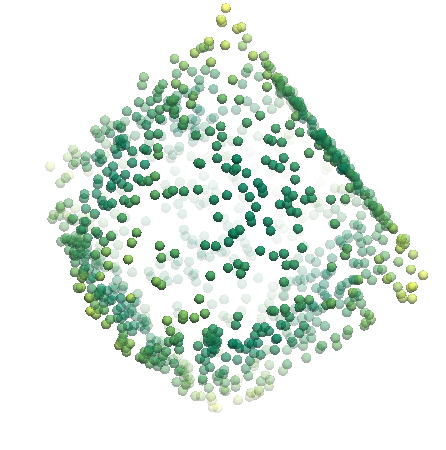} &
\includegraphics[width=0.13\linewidth]{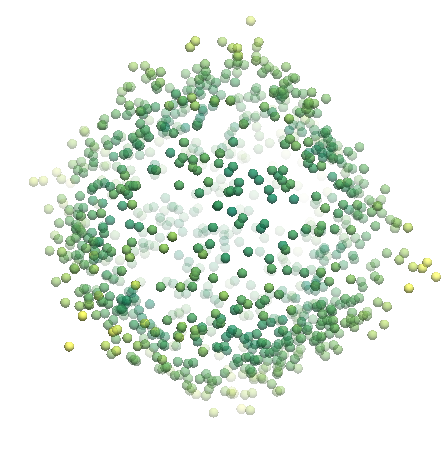} &
\includegraphics[width=0.13\linewidth]{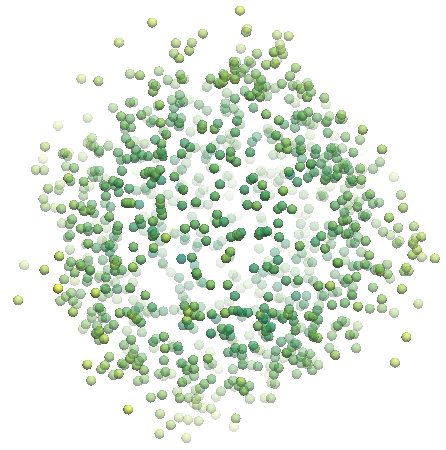} \\

\rotatebox{90}{\parbox{2cm}{\centering Octahedron}} &
\includegraphics[width=0.13\linewidth]{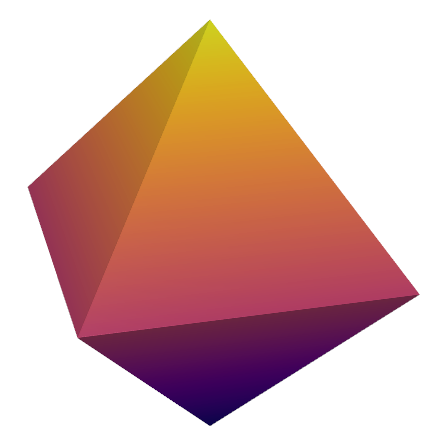} &
\includegraphics[width=0.13\linewidth]{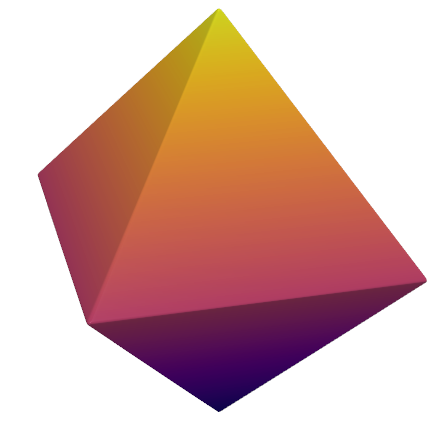} &
\includegraphics[width=0.13\linewidth]{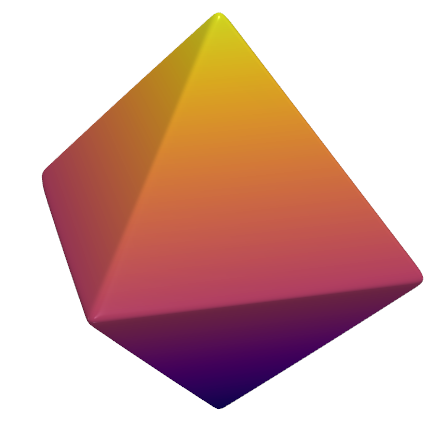} &
\includegraphics[width=0.13\linewidth]{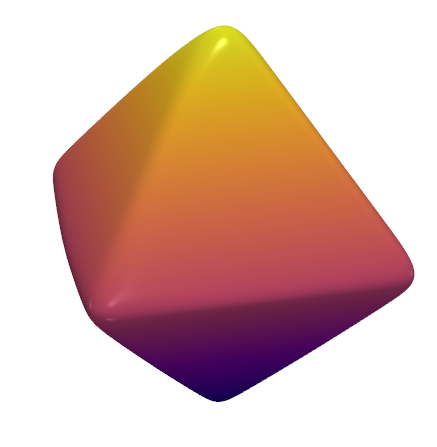} &
\includegraphics[width=0.13\linewidth]{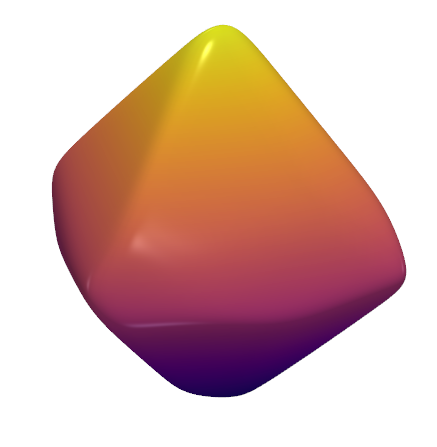} \\

\rotatebox{90}{\parbox{2cm}{\centering Ball}} &
\includegraphics[width=0.13\linewidth]{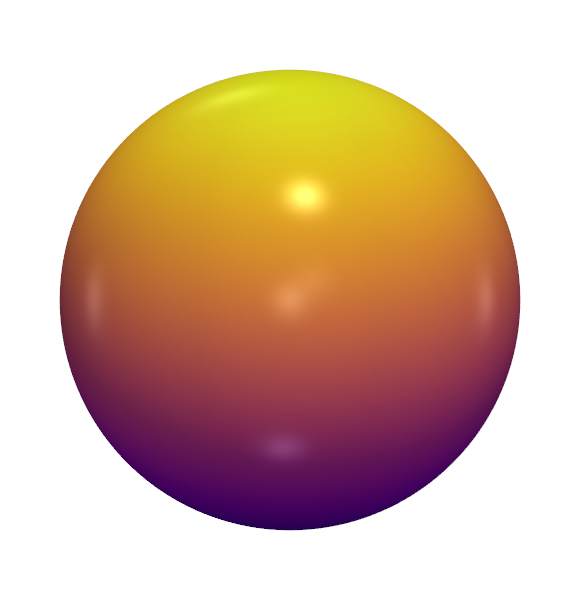} &
\includegraphics[width=0.13\linewidth]{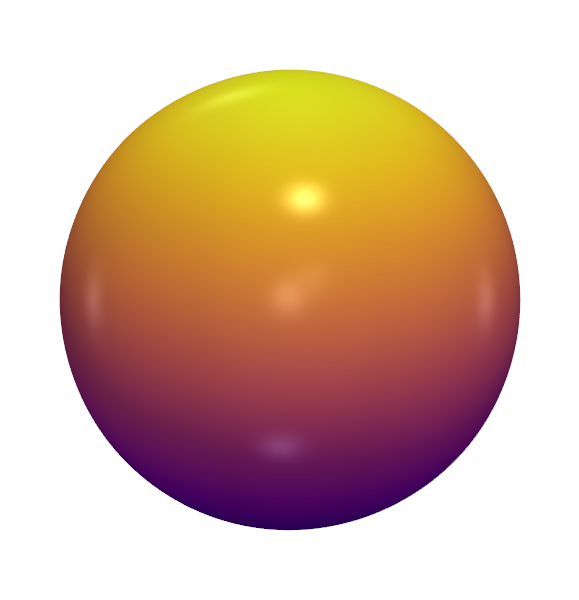} &
\includegraphics[width=0.13\linewidth]{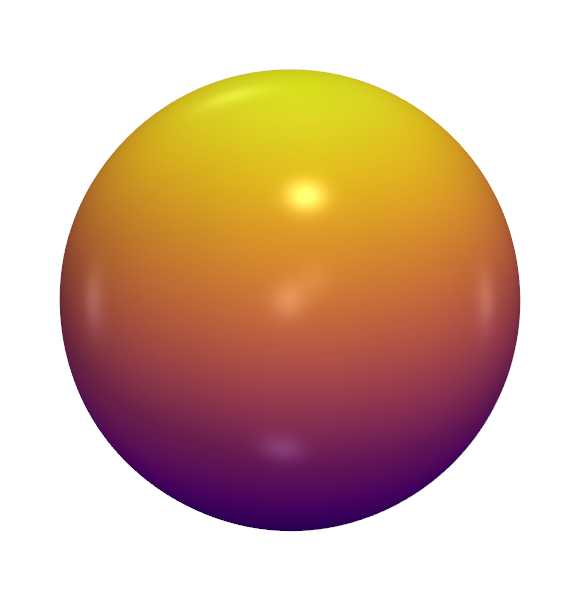} &
\includegraphics[width=0.13\linewidth]{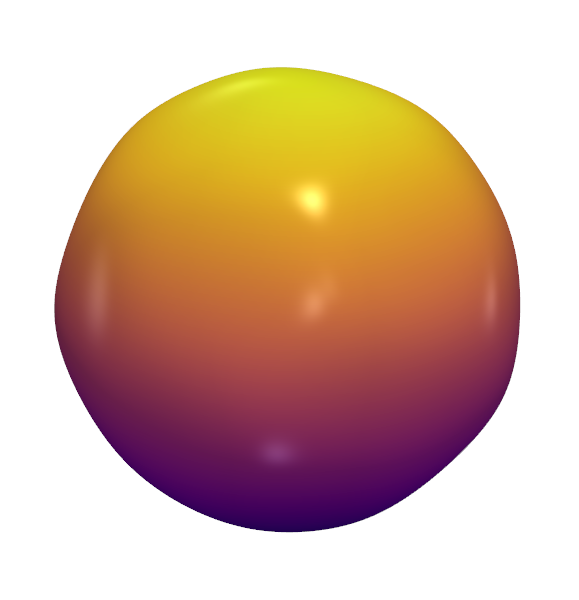} &
\includegraphics[width=0.13\linewidth]{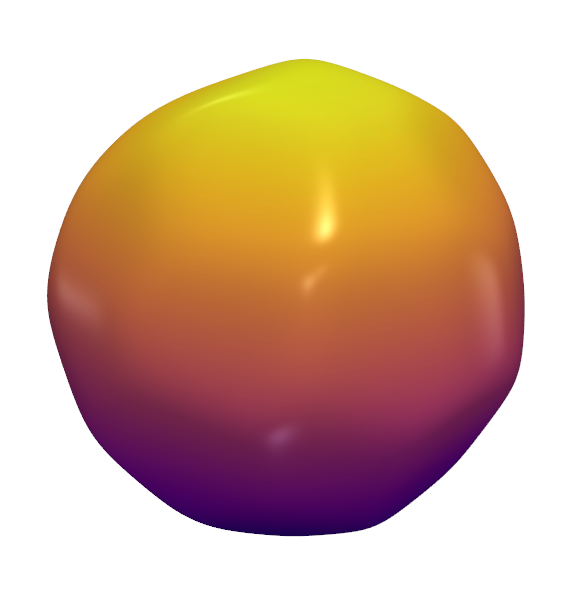} \\

\rotatebox{90}{\parbox{1.7cm}{\centering Cube}} &
\includegraphics[width=0.13\linewidth]{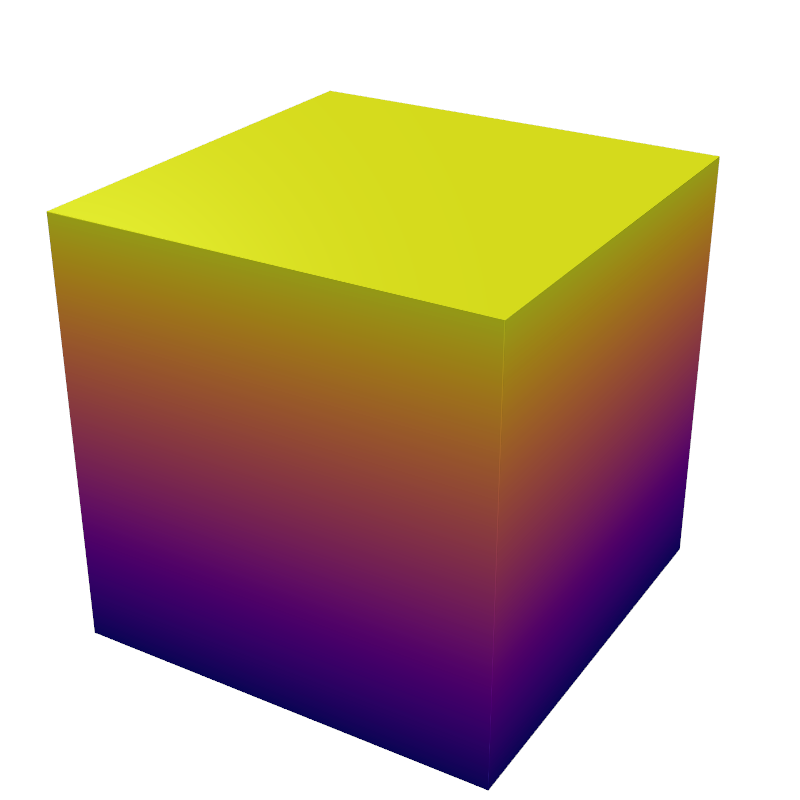} &
\includegraphics[width=0.13\linewidth]{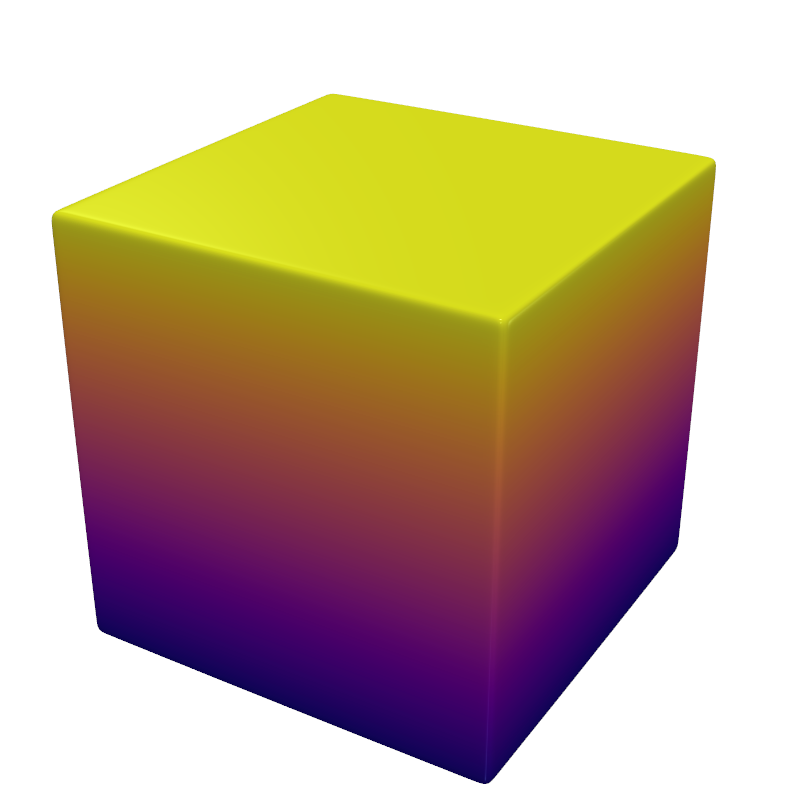} &
\includegraphics[width=0.13\linewidth]{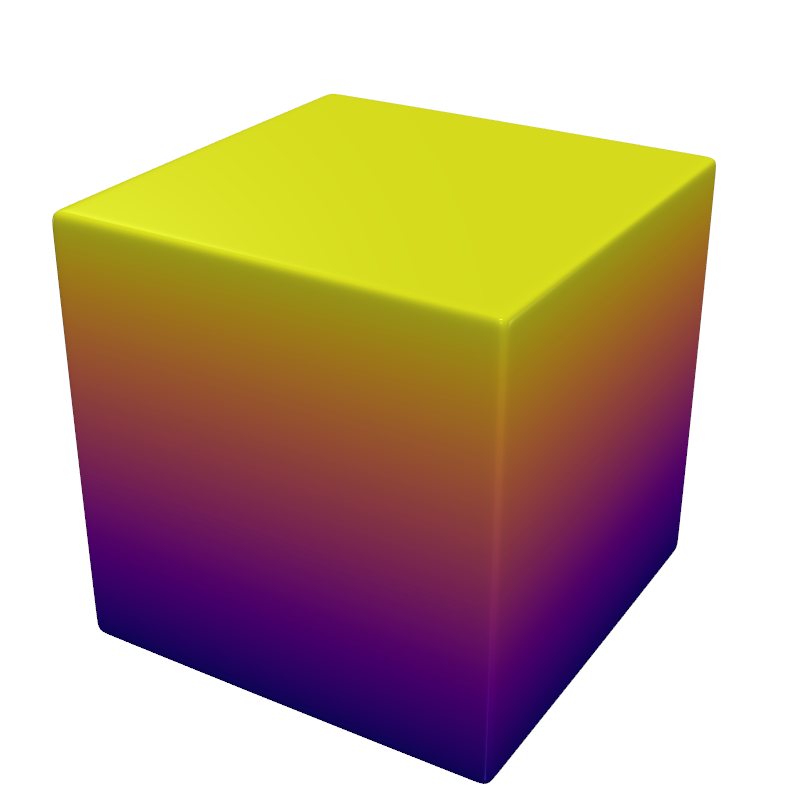} &
\includegraphics[width=0.13\linewidth]{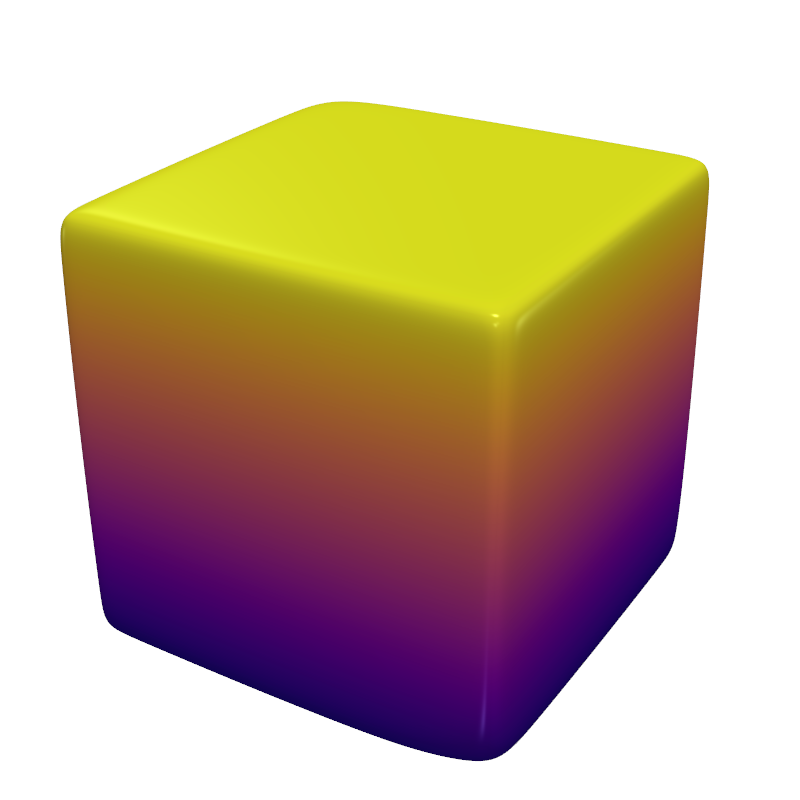} &
\includegraphics[width=0.13\linewidth]{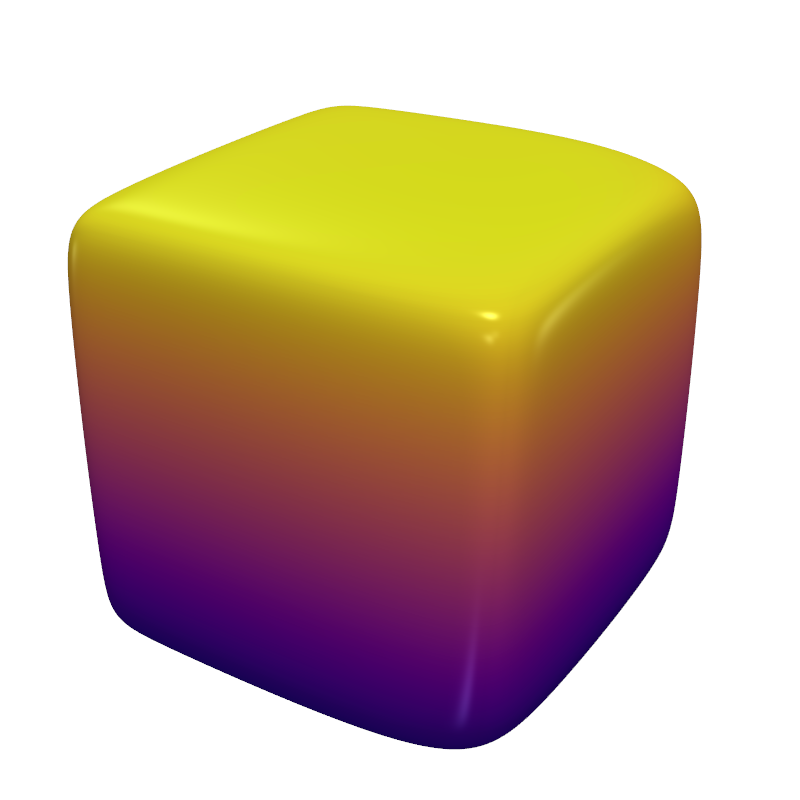} \\

\end{tabular}

\vspace{0.5em}

\caption{
Reconstruction of convex shapes from noisy point clouds.
First column: reference shapes.
Top row: input point clouds sampled from the octahedron under increasing Gaussian noise levels $\sigma$.
Remaining rows: reconstructed shapes for different target geometries.
}

\label{fig:noisy_fit}
\end{figure*}

\section{Application to shape optimization problems}

In this section, we present the performance of our method on a range of problems arising from geometry and shape optimization. For fairness and reproducibility, all experiments are conducted on a single CPU (AMD Ryzen 7 Pro) with 28 GB of RAM. The computational time for all two-dimensional cases is on the order of one minute, while in three dimensions it is typically on the order of ten minutes. The implementation is also CUDA-compatible and can therefore benefit from standard GPU acceleration. A direct quantitative comparison with classical methods is not possible in practice, as no publicly available implementations exist in a form that would allow a consistent and reproducible evaluation.

\subsection{Learning convex sets with noisy boundary samples}
\label{subseq:fit}

The first problem we consider is to reconstruct a convex shape from noisy observations. More precisely, given sampled $y_1, \dots, y_n \in \R^d$, we minimize $L(\theta) = \sum_{i=1}^n |p_\theta(y_i) - 1|^2$ where $p_\theta$ is the gauge function of the convex set $\Om_\theta$. This loss is motivated by the fact that $\dOm_\theta = \{ p_\theta = 1 \}$.

In practice, the observations $y_i$ are generated synthetically as follows: first, $x_i \sim \mathcal{U}(\partial B)$. Then, $y_i = \phi_{\text{target}}(x_i) + \varepsilon_i$ where $\varepsilon_i \sim \mathcal{N}(0, \sigma)$, and $\phi_{\text{target}}$ denotes a maps from $B$ to $\Omega_{\text{target}}$. The results, shown in \cref{fig:noisy_fit}, are obtained with $n = 1000$ samples and varying noise levels across three different shapes. We observe that the convex inductive bias enables an accurate reconstruction of the shapes, even with a relatively small number of samples and substantial noise. Additional statistical experiments assessing sensitivity to noise and to the amount of samples over multiple runs are reported in \cref{seq:fit_statistic}.

\subsection{Optimization of a Poisson problem}

One of the simplest PDE-constrained shape optimization problem is the following: considering a function $f: \R^d \to \R$ and $u_\theta$ to be the solution of \cref{eq:poisson} on a domain $\Om_\theta \in \mathcal{K}$, what is the minimum of $J(\Om_\theta) := \int_{\Om_\theta} u_\theta$? As described previously, the solution $u_\theta$ is computed by a mesh free Galerkin method, while the integral is evaluated by change of variables. We performs our experiments in the same settings as in \cite{Bogosel2023Feb2}, i.e. in dimension $2$ for two different functions $f$. The optimal shapes are given in \cref{fig:poisson_galerkin}. They can be compared to the ones in \cite{Bogosel2023Feb2}.


\begin{figure}
    \centering
    \includegraphics[width=0.3\linewidth]{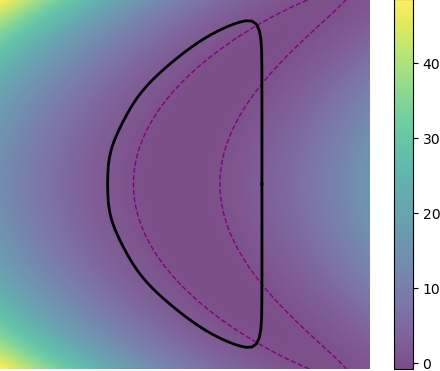}
    \quad
    \includegraphics[width=0.3\linewidth]{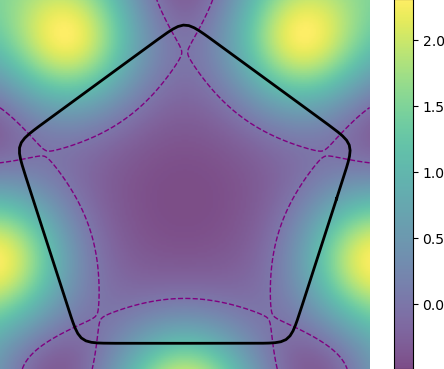}
    \caption{Optimal shape for the Poisson-based shape optimization problem (in black) along with the function $f$. The dotted line is the $0$ level set.}
    \label{fig:poisson_galerkin}
\end{figure}

\subsection{Maximization of the gradient of the torsion function}

In linear elasticity, the quantity $\|\nabla u_\Om \|_{L^\infty}$ (where $u_\Om$ is the torsion function \cref{eq:torsion}) represents the maximal shear stress of a rod with section $\Om$. In the case of a planar convex domain $\Om \subset \R^d$, many authors were interested in knowing what shape maximized this quantity when prescribing the area or the perimeter (see \cite{burdzy2025} and references therein). Using homogeneity arguments, it amounts at maximizing respectively $\frac{\|\nabla u_\Om\|_{L^\infty}}{\Vol(\Om)^{1/d}}$ and $\frac{\|\nabla u_\Om\|_{L^\infty}}{\Per(\Om)^{1/(d-1)}}$. Assuming enough regularity, the maximum principle applied on $|\nabla u|^2$ ensures that the infinity norm is attained on the boundary, i.e. $\|\nabla u_\Om\|_{L^\infty} = \max_{\partial \Om} |\partial_n u|$. For a parametrized convex set $\Om_\theta$, maximizing the previous quantities is hence equivalent to maximizing respectively
\[
    J_{\Vol}(\theta) := \frac{|\partial_n u_{\Om_\theta}(\phi_\theta(x))|}{\Vol(\Om_\theta)^{1/d}}
    \qquad \text{ and } \qquad
    J_{\Per}(\theta) := \frac{|\partial_n u_{\Om_\theta}(\phi_\theta(x))|}{\Per(\Om)^{1/(d-1)}}.
\]
for a fixed $x \in \partial B$. The optimal shapes in dimension $2$ and $3$ are given in \cref{fig:ilias}. In order to show that our algorithm easily adapts to higher dimensions, we give in \cref{tab:ilias} the optimal values obtained with our method for each functionals in dimension up to $4$. In dimension $2$, the values that are obtained matches previous numerical experiments found in the literature \cite{burdzy2025}.

\begin{figure}
    \centering
    \includegraphics[width=0.24\linewidth]{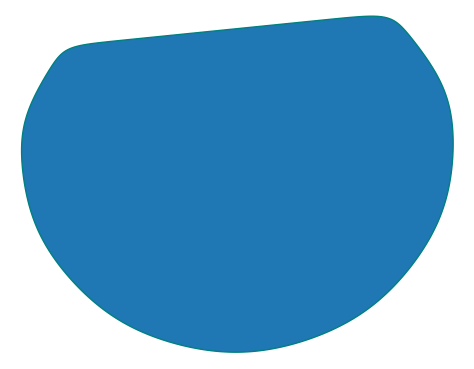}
    \includegraphics[width=0.24\linewidth]{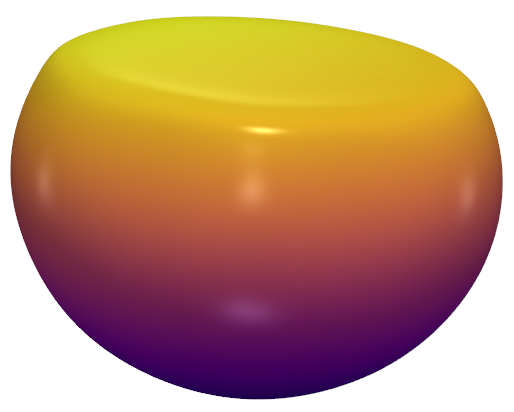}
    \includegraphics[width=0.24\linewidth]{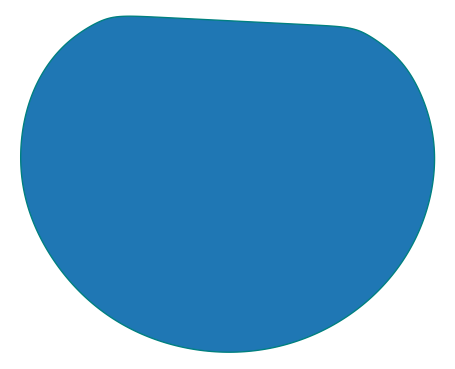}
    \includegraphics[width=0.24\linewidth]{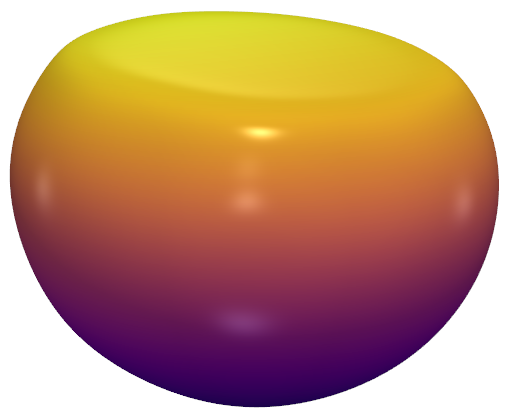}
    \caption{Optimal shapes for maximizing the torsion gradient under two constraints: fixed volume (left) and fixed perimeter (right). While the resulting shapes are similar, the choice of normalization produces distinct geometries.}
    \label{fig:ilias}
\end{figure}

\begin{table}
    \centering 
    \caption{Optimal values for $J_\Vol$ and $J_\Per$ in several dimensions.}
    \label{tab:ilias}
    \begin{tabular}{c c c} 
         \toprule
         Dimension & $J_\Vol^*$ &$J_\Per^*$ \\ [0.5ex] 
         \midrule
         2 & 0.35809 & 0.09886 \\
         3 & 0.30918 & 0.13842 \\
         4 & 0.28397 & 0.15532 \\
         \bottomrule
    \end{tabular}
\end{table}

\subsection{Minkowski problem}
\label{subseq:minkowski}

The Minkowski problem \cite{huang2025minkowski} is a foundational problem in convex and differential geometry, connected to the famous Monge--Ampère equation. Informally speaking, it is concerned with the question of existence of a convex set with prescribed Gaussian curvature. More precisely, let $g : \S^{n-1} \to \R$ be a positive, continuous function. Does there exists a convex set $\Om$ such that $\kappa_\Om \circ n_\Om^{-1} = g$ on $\partial B$? It is known that such $\Om$ exists if and only if $g$ verifies $\int_{\partial B} \frac{u}{g(u)} du = 0$. We can approximate this problem by formulating it as a regression problem, by minimizing the relative mean squared error between the actual and target curvature:
\[
    L(\theta) = \int_{\partial B} \left|\frac{\kappa_\theta(n_\theta^{-1}(u)) - g(u)}{g(u)}\right|^2 du.
\]
Except for $n_\theta^{-1}$, everything is easily computable in the present framework. However, as it is pointed out in \cite{schneider}, if $p_\theta$ is the support function of $\Om_\theta$ (i.e., choosing the parametrization \cref{eq:support_nn}) we have $n_\theta^{-1} = \nabla p_\theta = \phi_\theta$ on $\partial B$. Hence, the previous loss is readily computable. We show that our method successfully applies to the Minkowski problem in \cref{fig:minkowski}. The final $L^2$ relative error is of the order of $10^{-2}$.
\begin{figure}
    \centering
    \includegraphics[width=0.3\linewidth]{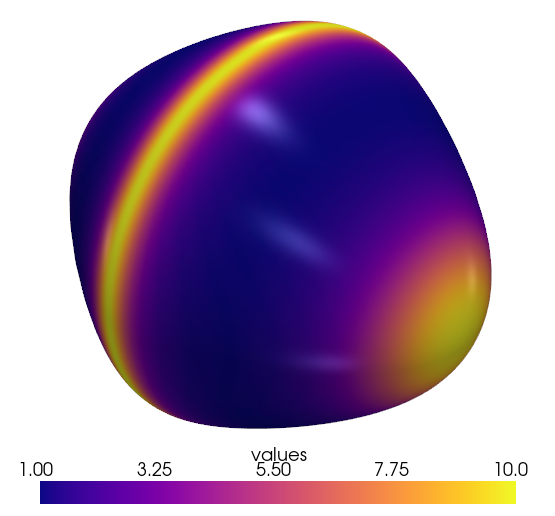}
    \hspace{1em}
    \includegraphics[width=0.3\linewidth]{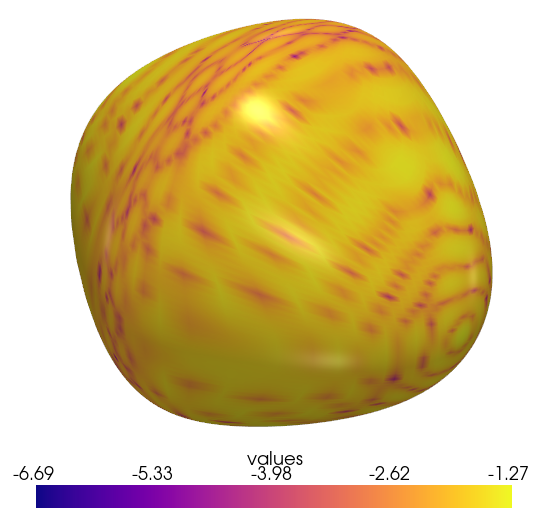}
    \caption{Solution to the Minkowski problem. Left: prescribed curvature. Right: $\log_{10}$ relative error. The method accurately recovers the target curvature distribution.}
    \label{fig:minkowski}
\end{figure}

\paragraph{Reproducibility}

The code is fully available on GitHub: \url{https://anonymous.4open.science/r/SublinearNet-5726/}.

\section{Discussion}

The proposed approach allows for a provably expressive and unconstrained representation of convex sets. Profiting from the auto-differentiation capabilities of PyTorch, it allows to easily solve a large variety of shape optimization problems without any shape derivative computation.

\paragraph{Limitations} However, several limitations remain. The reliance on quasi-Newton methods like L--BFGS with fixed discretizations limits scalability due to the curse of dimensionality. Certain geometric quantities (e.g., curvature) may become unstable near non-smooth shapes. Memory usage is also higher than in classical approaches, since PDE solvers must remain in the computational graph. Finally, symmetry enforcement increases computational cost linearly with respect to the size of the symmetry group due to repeated evaluations.

\paragraph{Broader impact}
We validate the proposed method primarily on mathematical shape optimization problems, where it may help accelerate exploration and conjecture generation. Extending the framework to more realistic engineering settings—such as structural or aerodynamic optimization—could further broaden its applicability and impact.

\section{Conclusion}

We introduced a neural parametrization of convex sets based on sublinear networks, ensuring exact convexity while retaining strong approximation capabilities. We proved universal approximation in the Hausdorff distance and showed how the parametrization enables efficient evaluation of geometric and PDE-dependent quantities via automatic differentiation.

Numerical experiments demonstrate competitive performance on a range of shape optimization problems, and show that classical solvers combined with our parametrization outperform PINN-based approaches in both accuracy and efficiency.

Future work includes extending the framework to more complex PDEs (e.g., elasticity or fluid dynamics) and designing architectures that preserve additional geometric constraints, like the volume (using measure-preserving neural networks) or the perimeter, for which a new architecture must probably be derived.

\section*{Acknowledgments}

Large language models were used to assist with code generation—particularly for plotting—and to explore mathematical ideas. 

\bibliographystyle{unsrtnat}  
\bibliography{biblio.bib}

@article{buttazzo1997shape,
  title={Shape optimization problems over classes of convex domains},
  author={Buttazzo, Giuseppe and Guasoni, Paolo},
  journal={Journal of Convex Analysis},
  volume={4},
  pages={343--352},
  year={1997},
  publisher={Heldermann Verlag}
}

@book{fasshauer2007meshfree,
  title={Meshfree approximation methods with Matlab (With Cd-rom)},
  author={Fasshauer, Gregory E},
  volume={6},
  year={2007},
  publisher={World Scientific Publishing Company}
}

@BOOK{schneider,
	title = {Convex Bodies: The Brunn-Minkowski Theory},
	publisher = {Cambridge University Press},
	author = {Schneider, R.},
	year = {2013},
	edition = {2nd expanded edition},
}

@article{Paszke2019Dec,
  title={Pytorch: An imperative style, high-performance deep learning library},
  author={Paszke, Adam and Gross, Sam and Massa, Francisco and Lerer, Adam and Bradbury, James and Chanan, Gregory and Killeen, Trevor and Lin, Zeming and Gimelshein, Natalia and Antiga, Luca and others},
  journal={Advances in neural information processing systems},
  volume={32},
  year={2019}
}

@article{Ftouhi2025Jun,
	author = {Ftouhi, Ilias},
	title = {{Improved Description of Blaschke{\textendash}Santal{\ifmmode\acute{o}\else\'{o}\fi} Diagrams via Numerical Shape Optimization}},
	journal = {Appl. Math. Optim.},
	volume = {91},
	number = {3},
	pages = {55},
	year = {2025},
	publisher = {Springer US},
}

@article{Wendland,
 author = {Holger Wendland},
 journal = {Mathematics of Computation},
 number = {228},
 pages = {1521--1531},
 publisher = {American Mathematical Society},
 title = {Meshless Galerkin Methods Using Radial Basis Functions},
 volume = {68},
 year = {1999}
}

@book {HPb,
    AUTHOR = {Henrot, Antoine and Pierre, Michel},
     TITLE = {Shape variation and optimization},
    SERIES = {EMS Tracts in Mathematics},
    VOLUME = {28},
 PUBLISHER = {European Mathematical Society (EMS), Z\"{u}rich},
      YEAR = {2018},
}

@incollection{Martinet2025May,
	author = {Martinet, Eloi and Bungert, Leon},
	title = {{Meshless Shape Optimization Using Neural Networks and Partial Differential Equations on Graphs}},
	booktitle = {{Scale Space and Variational Methods in Computer Vision}},
	journal = {SpringerLink},
	pages = {285--297},
	year = {2025},
}

@InProceedings{Deng2019Sep,
author = {Deng, B. and Genova, K. and Yazdani, S. and Bouaziz, S. and Hinton, G. and Tagliasacchi, A.},
title = {CvxNet: Learnable Convex Decomposition},
booktitle = {Proceedings of the IEEE/CVF Conference on Computer Vision and Pattern Recognition (CVPR)},
year = {2020}
}

@inproceedings{Goodfellow2013Feb,
  title={Maxout networks},
  author={Goodfellow, Ian and Warde-Farley, David and Mirza, Mehdi and Courville, Aaron and Bengio, Yoshua},
  booktitle={International conference on machine learning},
  pages={1319--1327},
  year={2013},
  organization={PMLR}
}

@book{boyd2004convex,
  title={Convex optimization},
  author={Boyd, Stephen P and Vandenberghe, Lieven},
  year={2004},
  publisher={Cambridge university press}
}

@incollection{Bauschke2017,
  title={Correction to: convex analysis and monotone operator theory in Hilbert spaces},
  author={Bauschke, Heinz H and Combettes, Patrick L},
  booktitle={Convex analysis and monotone operator theory in Hilbert spaces},
  year={2020},
  publisher={Springer}
}

@article{Antunes2022May,
	author = {Antunes, Pedro R. S. and Bogosel, Beniamin},
	title = {{Parametric shape optimization using the support function}},
	journal = {Comput. Optim. Appl.},
	volume = {82},
	number = {1},
	pages = {107--138},
	year = {2022},
	publisher = {Springer US},
}

@article{Bartels2020Apr,
	author = {Bartels, S{\ifmmode\ddot{o}\else\"{o}\fi}ren and Wachsmuth, Gerd},
	title = {{Numerical Approximation of Optimal Convex Shapes}},
	journal = {SIAM J. Sci. Comput.},
	year = {2020},
	month = apr,
	publisher = {Society for Industrial and Applied Mathematics},
}

@article{Bogosel2024Nov,
	author = {Bogosel, Beniamin and Henrot, Antoine and Michetti, Marco},
	title = {{Optimization of Neumann Eigenvalues Under Convexity and Geometric Constraints}},
	journal = {SIAM J. Math. Anal.},
	year = {2024},
	month = nov,
	publisher = {Society for Industrial and Applied Mathematics University City, Philadelphia},
}

@article{Bayen2012Dec,
	author = {Bayen, T{\ifmmode\acute{e}\else\'{e}\fi}rence and Henrion, Didier},
	title = {{Semidefinite programming for optimizing convex bodies under width constraints}},
	journal = {Optim. Methods Softw.},
	year = {2012},
	publisher = {Taylor {\&} Francis},
}

@article{BelieresFrendo2025Apr,
	author = {B{\ifmmode\acute{e}\else\'{e}\fi}li{\ifmmode\grave{e}\else\`{e}\fi}res Frendo, Amaury and Franck, Emmanuel and Michel-Dansac, Victor and Privat, Yannick},
	title = {{Volume-preserving geometric shape optimization of the Dirichlet energy using variational neural networks}},
	journal = {Neural Networks},
	volume = {184},
	pages = {106957},
	year = {2025},
	publisher = {Pergamon},
}

@article{Bogosel2023Feb2,
	author = {Bogosel, Beniamin},
	title = {{Numerical Shape Optimization Among Convex Sets}},
	journal = {Appl. Math. Optim.},
	volume = {87},
	number = {1},
	pages = {1},
	year = {2023},
	publisher = {Springer US},
}

@article{Gonzalez2010Jan,
	author = {Gonz{\ifmmode\acute{a}\else\'{a}\fi}lez, {\ifmmode\acute{A}\else\'{A}\fi}lvaro},
	title = {{Measurement of Areas on a Sphere Using Fibonacci and Latitude{\textendash}Longitude Lattices}},
	journal = {Math. Geosci.},
	volume = {42},
	number = {1},
	pages = {49--64},
	year = {2010},
	publisher = {Springer-Verlag},
}

@article{Bogosel2016Nov,
	author = {Bogosel, Beniamin},
	title = {{The method of fundamental solutions applied to boundary eigenvalue problems}},
	journal = {J. Comput. Appl. Math.},
	volume = {306},
	pages = {265--285},
	year = {2016},
	publisher = {North-Holland},
}

@article{Oudet2004,
	author = {Oudet, {\ifmmode\acute{E}\else\'{E}\fi}douard},
	title = {{Numerical minimization of eigenmodes of a membrane with respect to the domain}},
	journal = {ESAIM Control Optim. Calc. Var.},
	volume = {10},
	number = {3},
	pages = {315--330},
	year = {2004},
}

@article{Lachand-Robert2006Jul,
	author = {Lachand-Robert, Thomas and Oudet, {\ifmmode\acute{E}\else\'{E}\fi}douard},
	title = {{Minimizing within Convex Bodies Using a Convex Hull Method}},
	journal = {SIAM J. Optim.},
	year = {2006},
}

@article{Oudet2013Mar,
	author = {Oudet, {\ifmmode\acute{E}\else\'{E}\fi}douard},
	title = {{Shape Optimization Under Width Constraint}},
	journal = {Discrete Comput. Geom.},
	volume = {49},
	number = {2},
	pages = {411--428},
	year = {2013},
}

@inproceedings{Amos2017ICNN,
  title={Input Convex Neural Networks},
  author={Amos, Brandon and Xu, Lei and Kolter, J. Zico},
  booktitle={International Conference on Machine Learning (ICML)},
  year={2017}
}

@article{liu2025convex,
  title={Convex Shape Prior for Deep Convolution Neural Network-Based Image Segmentation},
  author={Liu, Jun and Zhang, Kehui and Tai, Xue-Cheng and Luo, Shousheng},
  journal={Journal of Mathematical Imaging and Vision},
  volume={67},
  number={6},
  pages={61},
  year={2025},
}

@article{tvetkova2025convex,
  title={On convex decision regions in deep network representations},
  author={T{\v{e}}tkov{\'a}, Lenka and Br{\"u}sch, Thea and Dorszewski, Teresa and Mager, Fabian Martin and Aagaard, Rasmus {\O}rtoft and Foldager, Jonathan and Alstr{\o}m, Tommy Sonne and Hansen, Lars Kai},
  journal={Nature Communications},
  volume={16},
  number={1},
  pages={5419},
  year={2025},
}

@article{chakib2024improved,
  title={An improved numerical approach for solving shape optimization problems on convex domains},
  author={Chakib, Abdelkrim and Khalil, Ibrahim and Sadik, Azeddine},
  journal={Numerical Algorithms},
  volume={96},
  number={2},
  pages={621--663},
  year={2024},
}

@article{shin2023topology,
  title={Topology optimization via machine learning and deep learning: a review},
  author={Shin, Seungyeon and Shin, Dongju and Kang, Namwoo},
  journal={Journal of Computational Design and Engineering},
  volume={10},
  number={4},
  pages={1736--1766},
  year={2023},
}

@article{lamberg2001numerical,
  title={Numerical solution of the Minkowski problem},
  author={Lamberg, L and Kaasalainen, M},
  journal={Journal of computational and applied mathematics},
  volume={137},
  number={2},
  pages={213--227},
  year={2001},
}

@book{evans2022partial,
  title={Partial differential equations},
  author={Evans, Lawrence C},
  volume={19},
  year={2022},
}

@book{evans2025measure,
  title={Measure theory and fine properties of functions},
  author={Evans, Lawrence C},
  year={2025},
}

@article{martinet2026,
  title={Numerical exploration of the range of shape functionals using neural networks},
  author={Martinet, Eloi and Ftouhi, Ilias},
  journal={arXiv preprint arXiv:2602.14881},
  year={2026}
}

@article{huang2025minkowski,
  title={Minkowski problems for geometric measures},
  author={Huang, Yong and Yang, Deane and Zhang, Gaoyong},
  journal={Bulletin of the American Mathematical Society},
  volume={62},
  number={3},
  pages={359--425},
  year={2025}
}

@article{burdzy2025,
  title={Geometric properties of optimizers for the maximum gradient of the torsion function},
  author={Burdzy, Krzysztof and Ftouhi, Ilias and Mariano, Phanuel},
  journal={arXiv preprint arXiv:2512.09400},
  year={2025}
}

@article{alves2021fundamental,
  title={Fundamental solutions for the Stokes equations: Numerical applications for 2D and 3D flows},
  author={Alves, Carlos JS and Serrao, Rodrigo G and Silvestre, Ana L},
  journal={Applied Numerical Mathematics},
  volume={170},
  pages={55--73},
  year={2021},
}

@article{marin2004method,
  title={The method of fundamental solutions for the Cauchy problem in two-dimensional linear elasticity},
  author={Marin, Liviu and Lesnic, Daniel},
  journal={International journal of solids and structures},
  volume={41},
  number={13},
  pages={3425--3438},
  year={2004},
  publisher={Elsevier}
}

@article{brasco2016spectral,
  title={Spectral inequalities in quantitative form},
  author={Brasco, Lorenzo and De Philippis, Guido},
  journal={arXiv preprint arXiv:1604.05072},
  year={2016}
}

@article{fusco2008sharp,
  title={The sharp quantitative isoperimetric inequality},
  author={Fusco, Nicola and Maggi, Francesco and Pratelli, Aldo},
  journal={Annals of mathematics},
  pages={941--980},
  year={2008},
  publisher={JSTOR}
}

@article{yu2018deep,
  title={The deep Ritz method: a deep learning-based numerical algorithm for solving variational problems},
  author={Yu, Bing and others},
  journal={Communications in Mathematics and Statistics},
  volume={6},
  number={1},
  pages={1--12},
  year={2018},
  publisher={Springer}
}

@article{boroczky2013volume,
  title={On the volume product of planar polar convex bodies—lower estimates with stability},
  author={B{\"o}r{\"o}czky, K and Makai, E and Meyer, Mathieu and Reisner, Shlomo},
  journal={Studia Scientiarum Mathematicarum Hungarica},
  volume={50},
  number={2},
  pages={159--198},
  year={2013},
  publisher={Akad{\'e}miai Kiad{\'o}}
}

@article{puny2021frame,
  title={Frame averaging for invariant and equivariant network design},
  author={Puny, Omri and Atzmon, Matan and Ben-Hamu, Heli and Misra, Ishan and Grover, Aditya and Smith, Edward J and Lipman, Yaron},
  journal={arXiv preprint arXiv:2110.03336},
  year={2021}
}

@article{grossmann2024can,
  title={Can physics-informed neural networks beat the finite element method?},
  author={Grossmann, Tamara G and Komorowska, Urszula Julia and Latz, Jonas and Sch{\"o}nlieb, Carola-Bibiane},
  journal={IMA Journal of Applied Mathematics},
  volume={89},
  number={1},
  pages={143--174},
  year={2024},
  publisher={Oxford University Press}
}

@article{mcgreivy2024weak,
  title={Weak baselines and reporting biases lead to overoptimism in machine learning for fluid-related partial differential equations},
  author={McGreivy, Nick and Hakim, Ammar},
  journal={Nature machine intelligence},
  volume={6},
  number={10},
  pages={1256--1269},
  year={2024},
  publisher={Nature Publishing Group UK London}
}

@article{geuzaine2009gmsh,
  title={Gmsh: A 3-D finite element mesh generator with built-in pre-and post-processing facilities},
  author={Geuzaine, Christophe and Remacle, Jean-Fran{\c{c}}ois},
  journal={International journal for numerical methods in engineering},
  volume={79},
  number={11},
  pages={1309--1331},
  year={2009},
  publisher={Wiley Online Library}
}

@article{gustafsson2020scikit,
  title={scikit-fem: A Python package for finite element assembly},
  author={Gustafsson, Tom and Mcbain, Geordie Drummond},
  journal={Journal of Open Source Software},
  volume={5},
  number={52},
  pages={2369},
  year={2020}
}

@article{barnett2008stability,
  title={Stability and convergence of the method of fundamental solutions for Helmholtz problems on analytic domains},
  author={Barnett, Alex H and Betcke, Timo},
  journal={Journal of Computational Physics},
  volume={227},
  number={14},
  pages={7003--7026},
  year={2008},
  publisher={Elsevier}
}

@article{dissanayake1994neural,
  title={Neural-network-based approximations for solving partial differential equations},
  author={Dissanayake, MWM Gamini and Phan-Thien, Nhan},
  journal={communications in Numerical Methods in Engineering},
  volume={10},
  number={3},
  pages={195--201},
  year={1994},
  publisher={Wiley Online Library}
}


\appendix

\section{Reminder of convex analysis}
\label{seq:convex_analysis}

Here we give some useful results of convex analysis. We begin with the definition of the convex conjugate of a function, that can be found in \cite{boyd2004convex}.

\begin{definition}[Convex conjugate]
    Let $f: \R^d \to (-\infty, +\infty]$. The convex conjugate of $f^* : \R^d \to (-\infty, +\infty]$ is defined as
    \begin{equation}
        \label{eq:conjugate}
        f^*(y) := \sup_{x \in \text{dom}(f)} \{ y \cdot x - f(x)\}
    \end{equation}
    where $\text{dom}(f)$ is the set of $x \in \R^d$ such that $f(x) < +\infty$.
\end{definition}

The following proposition can be found in \cite[Exercise 3.39]{boyd2004convex}:
\begin{proposition}
    \label{prop:bi-conjugate}
    Let $f: \R^d \to [-\infty, +\infty]$ be a proper convex and closed function. Then $f^{**} = f$.
\end{proposition}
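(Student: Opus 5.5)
The plan is to prove $f^{**}=f$ through the two inequalities $f^{**}\le f$ and $f^{**}\ge f$. The first is a direct consequence of the definition of the conjugate \cref{eq:conjugate}; the second uses a separation argument applied to the epigraph of $f$, which is where closedness and properness enter. Before starting, we dispose of the degenerate values. Properness means $f>-\infty$ everywhere and $\text{dom}(f)\neq\emptyset$, so $f$ may be regarded as a function with values in $(-\infty,+\infty]$. It also gives $f^*>-\infty$: pick $x_0\in\text{dom}(f)$, and then $f^*(y)\ge y\cdot x_0-f(x_0)$.

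\emph{Step 1 (easy inequality).} Fix $x\in\text{dom}(f)$. For every $y$, the definition gives $f^*(y)\ge y\cdot x-f(x)$, which rearranges to $x\cdot y-f^*(y)\le f(x)$ whenever $f^*(y)<+\infty$. Taking the supremum over $y\in\text{dom}(f^*)$ yields $f^{**}(x)\le f(x)$. For $x\notin\text{dom}(f)$ the inequality holds trivially.

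\emph{Step 2 (affine minorants).} The key intermediate claim is that every closed proper convex $f$ is the pointwise supremum of its affine minorants $x\mapsto y\cdot x-c$ with $y\cdot x-c\le f(x)$ for all $x$. Any such minorant satisfies $c\ge\sup_x\{y\cdot x-f(x)\}=f^*(y)$, so it lies below $x\mapsto y\cdot x-f^*(y)$, which in turn lies below $f^{**}$. Hence the claim gives $f\le f^{**}$.

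\emph{Step 3 (separation).} To prove the claim, fix $x_0$ and a real $t<f(x_0)$. The point $(x_0,t)$ does not belong to $\text{epi}(f)$, which is a nonempty closed convex subset of $\R^{d+1}$ because $f$ is closed, convex and proper. Strict separation therefore gives $(y,s)\ne0$ and $\alpha$ such that $y\cdot x+s\,r<\alpha<y\cdot x_0+s\,t$ for all $(x,r)\in\text{epi}(f)$. Since $r$ can be sent to $+\infty$, we must have $s\le0$.
\begin{itemize}
\item If $s<0$, normalize to $s=-1$. Then $x\mapsto y\cdot x-\alpha$ is an affine minorant of $f$ whose value at $x_0$ exceeds $t$, as required.
\item If $s=0$, which can only happen when $x_0\notin\text{dom}(f)$ (a vertical hyperplane), first produce some affine minorant $a$ of $f$ by running the $s<0$ case at a point of $\text{dom}(f)$, which is possible by properness. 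Then consider $a(x)+\lambda\,(y\cdot x-\alpha)$ for $\lambda>0$. It is still a minorant of $f$, since $y\cdot x-\alpha<0$ on $\text{dom}(f)$, and its value at $x_0$ tends to $+\infty$ as $\lambda\to\infty$.
\end{itemize}
In both cases the supremum of the affine minorants at $x_0$ exceeds every $t<f(x_0)$, which proves the claim.

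The main obstacle is this vertical-hyperplane case at points outside the domain. It is exactly where both closedness and properness are needed, and it is the step I would write most carefully.
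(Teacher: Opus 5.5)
Your proof is correct and complete. The inequality $f^{**}\le f$ follows from the definition, and $f\le f^{**}$ follows from writing $f$ as the supremum of its affine minorants via strict separation of $(x_0,t)$ from the closed epigraph; the vertical case $s=0$, which can only occur when $x_0\notin\text{dom}(f)$, is handled correctly by tilting a non-vertical minorant that properness guarantees. The paper gives no proof of its own and only cites Boyd--Vandenberghe (Exercise 3.39), whose standard solution is this same affine-minorant and epigraph-separation argument, so your route matches the referenced one.
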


In what follows, it will be useful to compute the composition of a convex function with a linear layer. The behavior of the convex conjugate with respect to the composition needs the following definition \cite[Definition 12.34]{Bauschke2017}:
\begin{definition}[Infimal postcomposition]
    Let $f: \R^d \to [-\infty, +\infty]$ and $L: \R^d \to \R^m$. The \textit{infimal postcomposition} of $f$ by $L$ is 
    \begin{align*}
      L \triangleright f\colon \R^m & \longrightarrow  [-\infty, +\infty] \\[-1ex]
      y & \longmapsto \inf_{Lx = y} f(x)
    \end{align*}
\end{definition}

In our case, we will only need the conjugate of the composition with a linear map, which is given by the following proposition (adapted from \cite[Proposition 13.24]{Bauschke2017}):
\begin{proposition}[Composition with a linear map]
    \label{prop:composition}
    Let $f: \R^d \to (-\infty, +\infty]$ and $A \in \R^{n\times m}$. Then
    \[
        (f \circ A)^* \leq A^T \triangleright f^*.
    \]
\end{proposition}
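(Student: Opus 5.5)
The plan is to prove the inequality pointwise, directly from the definition \cref{eq:conjugate} of the convex conjugate and the definition of the infimal postcomposition. Before starting I would fix the dimensions so that everything makes sense. I read $A \in \R^{n\times m}$ as a linear map $\R^m \to \R^n$ with $n = d$, so that $f \circ A : \R^m \to (-\infty,+\infty]$. Then $A^T : \R^n \to \R^m$, and $A^T \triangleright f^*$ is defined on $\R^m$, as is $(f\circ A)^*$.

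Fix $y \in \R^m$. I would show that $(f\circ A)^*(y) \leq f^*(z)$ for every $z \in \R^n$ with $A^T z = y$. Taking the infimum over all such $z$ then gives $(f\circ A)^*(y) \leq (A^T \triangleright f^*)(y)$. If no such $z$ exists, the infimum over the empty set is $+\infty$ and there is nothing to prove, so this case is dispatched first.

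For the main step, take $z$ with $A^T z = y$ and any $x \in \text{dom}(f\circ A)$, so that $Ax \in \text{dom}(f)$. I would rewrite the inner product through the adjoint identity:
\[
    y \cdot x - f(Ax) = (A^T z)\cdot x - f(Ax) = z \cdot (Ax) - f(Ax).
\]
Since $Ax \in \text{dom}(f)$, the definition of $f^*$ gives $z \cdot (Ax) - f(Ax) \leq \sup_{w \in \text{dom}(f)} \{ z\cdot w - f(w)\} = f^*(z)$. This is the Fenchel--Young inequality. Taking the supremum over $x \in \text{dom}(f\circ A)$ on the left yields $(f\circ A)^*(y) \leq f^*(z)$, which is the claim.

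No step here is a genuine obstacle; the argument is essentially one line. The only point needing care is the extended-value conventions. If $\text{dom}(f\circ A) = \emptyset$, the left side is a supremum over the empty set, equal to $-\infty$, and the inequality holds trivially. If $f^*(z) = +\infty$, the bound is vacuous. Since $f$ never takes the value $-\infty$, no expression of the form $\infty - \infty$ appears. Finally, the inequality is in general strict (equality needs a constraint qualification such as $\text{ri}\,\text{dom} f \cap \text{ran}\, A \neq \emptyset$), which is why only the inequality is claimed. That direction is exactly what the proof of sublinearity of $p_\theta$ needs, because there it is combined with $-S \leq 0$ to get $f^* \leq 0$.
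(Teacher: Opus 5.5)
Your proof is correct. The paper gives no argument of its own here, only a pointer to Bauschke--Combettes (Prop.~13.24), and your direct Fenchel--Young computation, including the empty-domain and $+\infty$ cases, is the standard proof of that fact.

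One caveat, which affects only your closing remark and not the proof itself. The inequality alone does not quite give $f^*\leq 0$ on all of $\text{dom}(f^*)$ in the sublinearity argument. For $y$ not of the form $Wq$ with $q$ in the simplex, the bound $(W\triangleright(-S))(y)$ equals $+\infty$, so you must also know that $f^*(y)=+\infty$ there. That follows from the equality case, which is available because $\text{LSE}$ is finite everywhere, or from a direct separation argument.
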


Finally, we will need to know the conjugate of the log-sum-exp function:
\begin{proposition}[Convex conjugate of the log-sum-exp]
    \label{prop:entropy}
    The convex conjugate of the log-sum-exp is the negative entropy function $-S$ where
    \begin{align*}
      S \colon \Delta^N & \longrightarrow (0, +\infty) \\[-1ex]
      y & \longmapsto -\sum_{i} y_i \log y_i
    \end{align*}
    where $\Delta^N$ is the $N$-dimensional simplex.
\end{proposition}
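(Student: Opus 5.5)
The plan is to compute the conjugate directly from \cref{eq:conjugate}. Since $\text{LSE}$ is finite everywhere, $\text{dom}(\text{LSE}) = \R^N$, so for $y \in \R^N$ we need
\[
    \text{LSE}^*(y) = \sup_{x \in \R^N} \Bigl\{ y \cdot x - \log \sum_{i=1}^N e^{x_i} \Bigr\}.
\]
We use the convention $0 \log 0 = 0$. We read the statement as saying that $\text{LSE}^* = -S$ on $\Delta^N$ and $\text{LSE}^* = +\infty$ outside $\Delta^N$. We split into three cases according to where $y$ lies.

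\emph{Case $y \notin \R^N_{\geq 0}$.} Suppose $y_j < 0$ for some $j$. Take $x = -t e_j$ with $t \to +\infty$. Then $y \cdot x = t|y_j| \to +\infty$, while $\text{LSE}(x) \le \log N$. Hence the supremum is $+\infty$.

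\emph{Case $y \geq 0$ but $\sum_i y_i \neq 1$.} Take $x = t\mathbf{1}$ for $t \in \R$. Then
\[
    y \cdot x - \text{LSE}(x) = t\Bigl(\sum_i y_i - 1\Bigr) - \log N .
\]
Letting $t \to \pm\infty$ with the appropriate sign makes this tend to $+\infty$. So $\text{LSE}^* = +\infty$ off the simplex, which is exactly the domain restriction in the statement.

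\emph{Case $y \in \Delta^N$.} Let $I := \{ i : y_i > 0 \}$. For the upper bound we use Jensen's inequality for the concave function $\log$ with weights $(y_i)_{i \in I}$:
\[
    \sum_{i\in I} y_i (x_i - \log y_i) = \sum_{i \in I} y_i \log \frac{e^{x_i}}{y_i} \le \log \sum_{i \in I} e^{x_i} \le \text{LSE}(x).
\]
Since $y \cdot x = \sum_{i \in I} y_i x_i$, this gives $y\cdot x - \text{LSE}(x) \le \sum_{i\in I} y_i \log y_i = -S(y)$ for every $x$.

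For the matching lower bound, set $x_i = \log y_i$ for $i \in I$ and $x_i = -t$ for $i \notin I$. Then
\[
    \text{LSE}(x) = \log\bigl(1 + (N - |I|)e^{-t}\bigr) \to 0,
\]
while $y \cdot x = \sum_{i \in I} y_i \log y_i$. Letting $t \to +\infty$, the supremum equals $-S(y)$. When $y$ lies in the relative interior of the simplex, this choice is exactly the stationary point $y_i = e^{x_i}/\sum_j e^{x_j}$ of the concave objective, and no limit is needed.

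The only delicate point is the boundary of the simplex. There the supremum is not attained, and the first-order condition cannot be used directly. The Jensen (Gibbs) inequality above handles all points of $\Delta^N$ uniformly, so this is the step I would write out most carefully.

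One remark on the statement itself: $S$ takes values in $[0, \log N]$, not $(0,+\infty)$. It vanishes at the vertices of the simplex. This is harmless for its use in the paper, which only needs $-S \le 0$.
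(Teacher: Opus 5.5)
Your proof is correct, and it departs from the paper's only in the main case. The paper merely points to the first-order optimality conditions (Example 3.25 of Boyd--Vandenberghe). There, $x \mapsto y\cdot x - \text{LSE}(x)$ is smooth and concave, so any solution of $y_i = e^{x_i}/\sum_j e^{x_j}$ is a global maximizer, and substituting $x_i = \log y_i$ yields $\sum_i y_i \log y_i$. The two unboundedness arguments off the simplex ($x = -te_j$ and $x = t\mathbf{1}$) are exactly the ones you give.

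That stationarity equation is solvable only when every $y_i > 0$. At relative-boundary points of $\Delta^N$ the supremum is not attained, and the formula is extended there just by invoking the convention $0 \log 0 = 0$. Strictly, this needs an additional argument, such as a limit along a segment from the relative interior (using that the conjugate is closed and convex) or a maximizing sequence like yours.

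Your Gibbs/Jensen bound holds for every $x$ and every $y \in \Delta^N$ simultaneously. The explicit sequence with $x_i = -t$ off the support supplies the matching lower bound. So you handle the boundary rigorously, without needing differentiability or attainment. What the optimality-condition route offers in return is the identification of the maximizer as the inverse of the softmax on the relative interior, which you recover anyway in your side remark.

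Your correction that $S$ takes values in $[0, \log N]$ rather than $(0,+\infty)$ is also right. As you say, it is harmless, since the paper only uses $-S \le 0$.
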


This can be shown by computing the optimality conditions in \eqref{eq:conjugate} (see \cite[Example 3.25]{boyd2004convex} for more details).

\section{Proofs}
\label{sec:proofs}

\begin{proof}[Proof of \cref{thm:uat}]
    We need to split the cases according to the definition of $\Om_\theta$. First, we define for $K,L \in \mathcal{K}$ the Hausdorff distance as $d_H(K,L) := \sup \left\{ \sup_{x \in K} d(x, L), \sup_{x \in L} d(x, K)\right\}$. 
    
    \textbf{Support function case:} Let $K \in \mathcal K$, $\eps > 0$. According to \cite[Theorem 1.8.19]{schneider}, there exist a polytope $P = \text{Conv}(w_1, \dots, w_m)$ such that $d_H(K, P) \leq \eps/2$. It is well known that the support function of $p$ is given by $h_P(x) := \sup_{ 1 \leq i \leq m} w_i \cdot x$. Now, let $p_\theta$ as defined in \cref{eq:lse_net} be such that $\theta = \{\beta, W_\beta\}$ where $W_\beta := \beta^{-1} ( w_1 \dots w_m )^T$ and let $\Om_\theta$ be defined by \cref{eq:support_nn}. This implies in particular that $h_{\Om_\theta} = p_\theta$. Since the log-sum-exp approximates the maximum function, we can take $\beta$ small enough so that $\|h_P - p_\theta\|_{C(\S^{n-1})} \leq \eps/2$. Moreover, according to \cite[Lemma 1.8.14]{schneider}, $d_H(P,\Om_\theta) = \|h_P - p_\theta\|_{C(\S^{n-1})}$. Putting everything together, we get that there exists $\theta$ such that
    \[
        d_H(K, \Om_\theta) \leq d_H(K, P) + d_H(P, \Om_\theta) \leq \eps
    \]
    hence $\mathcal{K}^\text{NN}$ is dense in $\mathcal{K}$.
    
    \textbf{Gauge function case:}
    We first prove the following fact: for $K,L \in \mathcal{K}$, we have $d_H(K,L) \leq \|\rho_K - \rho_L\|_{C(\S^{n-1})}$ where
    \begin{align*}
      \rho_K \colon \S^{n-1} & \longrightarrow  \R^+\\[-1ex]
      x & \longmapsto \sup\{r > 0 : rx \in K\} 
      \end{align*}
    is the \textit{radial function} of $K$.
    In particular, it is immediate that the radial function is the inverse of the gauge function on $\S^{n-1}$.
    For $x \in K$, let us define 
    \[
        y = \begin{cases}
            \rho_L\left(\frac{x}{\|x\|}\right)\frac{x}{\|x\|} &\mbox{ if } x \not\in L\\
            x &\mbox{ if } x \in L
        \end{cases}.
    \]
    Then $y \in L$ and we have
    \begin{align*}
        d(x, L) 
        &\leq \max \left\{0, \|x\| - \rho_L\left(\frac{x}{\|x\|}\right)\right\} 
        \leq \left|\rho_K\left(\frac{x}{\|x\|}\right) - \rho_L\left(\frac{x}{\|x\|}\right)\right\|
        \leq \|\rho_K - \rho_L\|_{C(\S^{n-1})}
    \end{align*}
    Hence $\sup_{x\in K}d(x, L) \leq \|\rho_K - \rho_L\|_{C(\S^{n-1})}$ . By symmetry, we deduce the claim.

    Now, let $K \in \mathcal K$, $\eps > 0$. Let $P$ be a polytope such that $P \subset K$ and $d_H(K, P) \leq \eps$. According to \cite[Theorem 2.4.3]{schneider}, there exists $m>0$ and vectors $w_1, \dots, w_m \in \R^d$ such that
    \[
        P := \bigcap_{1\leq i\leq m} \left\{x \in \R^d : w_i \cdot x \leq 1 \right\}.
    \]
    In particular, we have $g_P(x) = \max_{1 \leq i \leq m} w_i \cdot x$. The condition $P \subset K$ implies that $0 < \alpha := \min_{u \in \S^{n-1}} g_K(u) \leq \min_{u \in \S^{n-1}} g_P(u)$. Similarly as before, we can take $\beta$ small enough so that 
    \[
        \|g_P - p_\theta\|_{C(\S^{n-1})} \leq \eps \quad \mbox{and} \quad \min_{u \in \S^{n-1}} p_\theta(u) \geq \alpha/2.
    \]
    Hence, for $\Om_\theta$ defined as in \cref{eq:gauge_nn}, we have
    \begin{align*}
        \|\rho_{\Om_\theta} - \rho_P\|_{C(\S^{n-1})}
        =  \left\|\frac{1}{p_\theta} - \frac{1}{g_P}\right\|_{C(\S^{n-1})} 
        \leq  \frac{\eps}{\min_{\S^{n-1}} p_\theta \min_{\S^{n-1}} g_P}
        \leq \frac{2}{\alpha^2} \eps\\ 
    \end{align*}
    leading to $d_H(K, \Om_\theta) \leq d_H(K, P) + d_H(P, \Om_\theta) \leq \left(1 + \frac{2}{\alpha^2}\right) \eps$.
\end{proof}

\begin{proof}[Proof of proposition \labelcref{prop:symmetries}]
    We will use \cref{eq:convex_from_func}. Let $p_\theta$ be a sublinear network.
    
    \textbf{Case 1 (gauge):} assume that $\Om_\theta := \left\{ x \in \R^d : p_\theta^G(x) \leq 1 \right\}$. For $x \in \Om_\theta$ and $g \in G$, we have that $p_\theta^G(g.x) = \frac{1}{|G|} \sum_{\tilde g \in G} p_\theta(g.(\tilde g.x)) = \frac{1}{|G|} \sum_{\tilde g \in G} p_\theta((g\tilde g).x)  = p_\theta^G(x) \leq 1$ hence $g.\Om_\theta \subset \Om_\theta$. We can then deduce the reverse inclusion by $x \in \Om_\theta \implies g^{-1}.x \in \Om_\theta \implies x \in g.\Om_\theta$.

    \textbf{Case 2 (support):} assume that $\Om_\theta := \left\{ x \in \R^d : x \cdot y \leq p_\theta^G(y) \text{ for all } y \in \R^d \right\}$. For $x \in \Om_\theta$ and $g \in G$, we have for all $y \in \R^d$:
    \[
        (g.x) \cdot y = x \cdot (g^{-1}.y) = \leq p_\theta^G(g^{-1}.y) = p_\theta^G(y)
    \]
    hence $g.\Om_\theta \subset \Om_\theta$. The reverse inclusion follows.
\end{proof}

\section{Computation of shape quantities}
\subsection{Geometric--differential quantities}
\label{sec:geo_diff}

\paragraph{Expression of the normal vector} As it has been stated, the normal vector is defined by 
\[
    n_\theta\left(y \right) = \frac{\left(D\phi_\theta\right)^{-T}(x) n_B(x)}{\left\|\left(D\phi_\theta\right)^{-T}(x) n_B(x)\right\|}.
\]
for $y = \phi_\theta(x)$, $x \in \partial B$. Indeed, if $\varphi$ is a level set function associated to a smooth set $\Om$ (i.e. $\Om = \{ \varphi \leq 0 \}$), then for $y \in \dOm$, the normal vector can be computed as
\[
    n(y) = \frac{\nabla \varphi(y)}{\|\nabla \varphi(y)\|}.
\]
In the case of $\Om_\theta$, the function $\varphi(y) = \|\phi_\theta^{-1}(y)\|^2 - 1$ is an admissible level set function since $\varphi(y) \leq 0 \iff y \in \Om_\theta$. Computing the gradient leads to $\nabla \varphi(y)  = D \left(\phi_\theta^{-1}\right)^T(y).2\phi_\theta^{-1}(y) = 2 \left(D\phi_\theta\right)^{-T}(x) x$, from which we deduce the formula by normalization. Notice that we never need the expression of $\phi_\theta^{-1}$ to compute $n_\theta(y)$ when $y=\phi_\theta(x)$.

\paragraph{Numerical computation of the Weingarten map} The Weingarten map is defined as
\begin{align*}
    S_y \colon T_y \partial \Om_\theta &\longrightarrow  T_y \partial \Om_\theta\\
    v &\longmapsto D_{\Gamma} n(y).v
\end{align*} where $D_{\Gamma} n(y)$ is the tangential derivative of $n$, i.e.  the restriction of $D n(y) \in \R^{d \times d}$ to the tangent space $T_y \partial \Om_\theta$. In order to compute it numerically, one introduces the Housholder matrix $H_y = I - vv^T$ where 
\[ 
    v = \frac{n(y)-e_d}{\|n(y)-e_d\|},
\]
$e_1, \dots, e_d$ being the canonical basis of $\R^d$. Hence, $H_yn=e_d$ and in particular, the first $d-1$ lines of $H_y$ are orthogonal to $n(y)$ and hence forms an orthonormal basis of $T_y \partial \Om_\theta$. One can then compute $D_{\partial \Om_\theta} n(y)$ in this basis as 
\[
    D_{\partial \Om_\theta} n(y) = \tilde H_y Dn(y) \tilde H_y^T \in \R^{(d-1) \times (d-1)}
\]
where $\tilde H = (H_{ij})_{\substack{1 \leq i \leq d-1\\1 \leq j \leq d}}$.

\subsection{Method of fundamental solutions}
\label{seq:mfs_galerkin}

An important special case of \cref{eq:poisson} is the case where $f$ is identically equal to $1$, i.e.
\begin{equation}
    \label{eq:torsion}
    \begin{cases}
        -\Delta u &=\ \  1\ \ \ \mbox{ in } \Om_\theta,\\
        \ \ \ \ \ u &=\ \  0\ \ \   \mbox{ on } \dOm_\theta.
    \end{cases}
\end{equation}
In this case, the solution $u\in H^1_0(\Om_\theta)$ is called the \textit{torsion function}, which is an important function in mechanics that has been extensively studied in mathematics. One important derived quantity is the \textit{torsional rigidity} defined as 
\begin{equation}
    \label{eq:torsional_rigidity}
    T(\Om_\theta) = \int_{\Om_\theta} u
\end{equation}
which describes the rigidity of a rod of cross section $\Om_\theta$.

It is well known that \cref{eq:torsion} can be solved in an extremely precise and efficient way using the \textit{method of fundamental solutions} \cite{barnett2008stability, Antunes2022May, Bogosel2016Nov}. Indeed, by putting $\phi(x) = u(x) + \frac{x_1^2}{2}$, \cref{eq:torsion} can be equivalently reformulated as the following Laplace equation:
\begin{equation}
    \begin{cases}
        -\Delta \phi &= \ \ 0\ \ \ \  \mbox{ in } \Om_\theta,\\
        \ \ \ \ \ \phi &= \ \ \frac{x_1^2}{2}\ \ \    \mbox{ on } \dOm_\theta.
    \end{cases}
\end{equation}
One may then seek an approximate solution $\tilde \phi$ expressed as a linear combination of fundamental solutions, namely,
\[
    \tilde \phi(x) := \sum_{i=1}^n c_i \psi(x - y_i),
\]
where $\psi$ is the fundamental solution to $-\Delta \psi = \delta_0$ in $\R^d - \{0\}$ and $y_1,\dots, y_n \in \Om_\theta^c$ \cite{evans2022partial}. Since $\phi$ is harmonic in $\Om$, we only have to fit the boundary condition, for instance, in an $L^2$ sense, which amounts at solving
\[
    \min_{c_1, \dots, c_n \in \R} \int_{\dOm} \left| \tilde \phi(x) - \frac{x_1^2}{2} \right|^2d\sigma.
\]
Since this integral cannot be analytically computed for a general $\Om$, it is discretized and the resulting least squares problem is solved using \verb|torch.linalg.lstsq|. 

There exists several valid choices for the placement of the sources $y_1, \dots, y_n$. In our case, we decided to do it the following way: draw samples $x_1, \dots, x_n \in \partial B$, and set $y_i := \phi_\theta(x_i) + \eps n_\theta(x_i)$ for $\eps > 0$. This way, we have the guarantee that $y_i$ lies outside and at distance $\eps$ of $\Om_\theta$. An alternative and more flexible approach in the case where $\Om_\theta$ is parametrized by a gauge function \cref{eq:gauge_nn} is to take $y_i = \phi_\theta\left((1+\eps)x_i\right)$. This has the advantage that scaling $\Om_\theta$ leads to the same scaling of the distance from $y_i$ to $\Om_\theta$.

It is well known that this method is very sensitive to the source placement, and that the conditioning worsens when the number of sources increase. However, this can be mitigated by an \textit{a posteriori} error estimation using the maximum principle: knowing that $\|\phi-\tilde\phi\|_\infty := \sup_{\Om_\theta} |\phi - \tilde \phi| = \sup_{\dOm_\theta} |\phi - \tilde \phi| = \sup_{x \in \dOm_\theta} \left|\frac{x_1^2}{2} - \tilde \phi\right|$, we can estimate the last quantity by sampling a large amount of points on $\dOm_\theta$. If the error is greater than a certain tolerance (taken in our experiments of the order $10^{-4}-10^{-5}$), we can modify the source placement parameter $\eps$ and increase the number of sources until the tolerance is reached.

\begin{remark}
    While we decided to treat the case of the Laplace equation for simplicity, this approach can easily be extended to other, more practical settings where we have access to the fundamental solutions of the PDE, like in linearized elasticity \cite{marin2004method} for the optimal design of structures or in the Stokes problem \cite{alves2021fundamental}.
\end{remark}


\section{Comparison with PINNs}
\label{seq:pinn}

As it has already been stressed, PINNs are not particularly well suited in general for the computation of PDE-constrained shape optimization. However, there is some very specific problems for which they can easily be applied, namely when the objective function is the \textit{Dirichlet energy} of the shape. This is the type of energy that has been considered in \cite{BelieresFrendo2025Apr}. In this section, we will compare the use of PINNs with the other methods previously introduced on a similar problem for which we know the analytical solution.

Consider the following problem:
\begin{equation}
    \label{eq:saint_venant}
    \max_{\substack{\Om \in \mathcal K \\ \Vol(\Om) = 1}} T(\Om)
\end{equation}
where $T$ is the torsional rigidity defined in \cref{eq:torsional_rigidity}. A foundational result in shape optimization, called the Saint--Venant inequality, states that the solution of this problem is the ball \cite{brasco2016spectral}.

The Dirichlet energy associated with \cref{eq:torsion} is defined as
\begin{equation}
    \label{eq:dirichlet_energy}
    E(\Om, v) := \frac{1}{2} \int_\Om |\nabla v|^2 - \int_\Om v
\end{equation}
for $v \in H^1_0(\Om)$. A fundamental property of the Dirichlet energy is that the minimizer $u_\Om$ of $E(\Om, \dot)$ is the solution of \cref{eq:torsion}. Using the fact that $\int_\Om |\nabla u_\Om|^2 = \int_\Om u_\Om = T(\Om)$, this implies that $E(\Om, u_\Om) = -\frac{T(\Om)}{2}$. Hence:
\[
    \min_{\substack{\Om \in \mathcal K, v \in H^1_0(\Om) \\ \Vol(\Om) = 1}} E(\Om, v) = \min_{\substack{\Om \in \mathcal K \\ \Vol(\Om) = 1}} \min_{v \in H^1_0(\Om)} E(\Om, v) = \min_{\substack{\Om \in \mathcal K \\ \Vol(\Om) = 1}} E(\Om, u_\Om) = - \frac{1}{2} \max_{\substack{\Om \in \mathcal K \\ \Vol(\Om) = 1}} T(\Om), 
\]
which means that minimizing $E$ in both $\Om$ and $v$ is actually equivalent to \cref{eq:saint_venant}. Using the following proposition, we can further reformulate the problem:
\begin{proposition}
    Any minimizer $(\Om, v)$ of 
    \begin{equation}
        \label{eq:dirichlet_constrained}
        \min_{\substack{\Om \in \mathcal K, v \in H^1_0(\Om) \\ \Vol(\Om) = 1}} E(\Om, v)
    \end{equation}
    is a minimizer of 
    \begin{equation}
        \label{eq:dirichlet_rescaled}
        \min_{\Om \in \mathcal K, v \in H^1_0(\Om)} \frac{E(\Om, v)}{\Vol(\Om)^\frac{d+2}{d}}.
    \end{equation}
    Reciprocally, if $(\Om, v)$ is a minimizer of \cref{eq:dirichlet_rescaled} then $\left(\alpha_\Om \Om, \alpha_\Om^2 v(\cdot / \alpha_\Om)\right)$ where $\alpha_\Om = \Vol(\Om)^{-1/d}$ is a minimizer of \cref{eq:dirichlet_constrained}.
\end{proposition}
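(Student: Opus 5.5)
The plan is to use the scaling behaviour of $E$ and $\Vol$ under dilations. For $\lambda>0$ and $v\in H^1_0(\Om)$, set $v_\lambda(x):=\lambda^2 v(x/\lambda)\in H^1_0(\lambda\Om)$. A change of variables gives $\nabla v_\lambda(x)=\lambda(\nabla v)(x/\lambda)$, so
\[
\int_{\lambda\Om}|\nabla v_\lambda|^2=\lambda^{d+2}\int_\Om|\nabla v|^2
\qquad\text{and}\qquad
\int_{\lambda\Om}v_\lambda=\lambda^{d+2}\int_\Om v .
\]
Hence $E(\lambda\Om,v_\lambda)=\lambda^{d+2}E(\Om,v)$. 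Since also $\Vol(\lambda\Om)=\lambda^d\Vol(\Om)$, the quotient
\[
F(\Om,v):=\frac{E(\Om,v)}{\Vol(\Om)^{\frac{d+2}{d}}}
\]
is invariant under $(\Om,v)\mapsto(\lambda\Om,v_\lambda)$. We also need that $\mathcal K$ is stable under dilations, which holds because $\lambda\Om$ is still a convex body containing $0$ in its interior. The map $v\mapsto v_\lambda$ is a bijection from $H^1_0(\Om)$ onto $H^1_0(\lambda\Om)$, with inverse given by the same formula with $1/\lambda$.

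\textbf{Direct implication.} Let $(\Om,v)$ minimize \cref{eq:dirichlet_constrained}, and let $(\Om',v')$ be any admissible pair for \cref{eq:dirichlet_rescaled}. Put $\alpha=\Vol(\Om')^{-1/d}$. Then $(\alpha\Om',v'_\alpha)$ is admissible for \cref{eq:dirichlet_constrained}, because $\Vol(\alpha\Om')=1$. By invariance of $F$ and the normalization $\Vol=1$,
\[
F(\Om',v')=F(\alpha\Om',v'_\alpha)=E(\alpha\Om',v'_\alpha)\ge E(\Om,v)=F(\Om,v).
\]
So $(\Om,v)$ minimizes \cref{eq:dirichlet_rescaled}.

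\textbf{Converse.} Let $(\Om,v)$ minimize \cref{eq:dirichlet_rescaled}. The pair $(\alpha_\Om\Om,\,\alpha_\Om^2v(\cdot/\alpha_\Om))$ has volume $1$, so it is admissible for \cref{eq:dirichlet_constrained}. For any admissible $(\Om',v')$ with $\Vol(\Om')=1$ we get
\[
E(\Om',v')=F(\Om',v')\ge F(\Om,v)=F(\alpha_\Om\Om,v_{\alpha_\Om})=E(\alpha_\Om\Om,v_{\alpha_\Om}),
\]
which is the claimed minimality.

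There is no real obstacle here. The only step needing care is the exponent bookkeeping in the change of variables: the Jacobian contributes $\lambda^d$, and the quadratic gradient term and the linear term must both pick up exactly $\lambda^2$. This is precisely what the choice $\lambda^2 v(\cdot/\lambda)$ ensures, and it is why the exponent $\frac{d+2}{d}$ appears in \cref{eq:dirichlet_rescaled}.
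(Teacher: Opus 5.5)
Your proof is correct and follows the paper's argument. The paper also rests on the scaling identity $E(\alpha\Om,\alpha^2 v(\cdot/\alpha))=\alpha^{d+2}E(\Om,v)$ together with $\Vol(\alpha\Om)=\alpha^d\Vol(\Om)$; you additionally write out both implications, and the dilation-invariance of $\mathcal K$ and of $H^1_0$, which the paper leaves as ``the proposition follows.''
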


\begin{proof}
    Using the change of variable formula, we can show that for $\alpha>0$,
    \[
        E\left(\alpha \Om, \alpha^2 v(\cdot/\alpha) \right) = \alpha^{d+2} E(\Om, v).
    \]
    The proposition follows.
\end{proof}

Therefore, we can actually drop the volume constraint and instead solve \cref{eq:dirichlet_rescaled}. For a parametrized set $\Om_\theta$ and $v \in H^1_0(\Om_\theta)$, we have 
\[ 
    E(\Om_\theta, v) = \frac{1}{2} \int_B A_\theta \nabla (v \circ \phi_\theta^{-1}) \cdot \nabla (v \circ \phi_\theta^{-1}) - \int_B (\Jac \phi_\theta) (v \circ \phi^{-1});
\]
moreover, minimizing this last expression with respect to $v \in H^1_0(\Om_\theta)$ is equivalent to minimizing 
\[ 
    F(\Om_\theta, v) = \frac{1}{2} \int_B A_\theta \nabla v \cdot \nabla v - \int_B (\Jac \phi_\theta) v.
\]
for $v \in H^1_0(B)$. Finally, by taking $v$ to be a neural network $v_\eta$ where $\eta$ are its parameters, we can approximate \cref{eq:saint_venant} by equivalently minimizing the loss 
\[  
    L(\theta, \eta) := \frac{F(\Om_\theta, v_\eta)}{\Vol(\Om)^\frac{d+2}{d}}.
\]

\begin{remark}
    In order to exactly impose the Dirichlet boundary condition on $\partial B$, we chose $v_\eta$ of the form $v_\eta(x) = \text{dist}_B(x) \text{MLP}_\eta(x)$ where $\text{dist}_B$ is the signed distance function of $B$.
\end{remark}

This approach follows the line of the so-called \textit{Deep Ritz Method} \cite{yu2018deep}. In opposition to what was previously described in this paper and to classical shape optimization algorithms, one does not need to fully solve the state equation before updating the shape; the state and the shape are jointly optimized, and one can expect $v_\theta$ to be close to the solution of the state equation only at convergence. Since we do not have to solve a PDE at each iteration, one could expect a certain speedup of this method compared to classical ones; it is however not the case, as is shown in the next experiment.

\paragraph{Evaluation of the method:} We compare the speed and accuracy of the previously described method with the maximization of $\frac{T(\Om_\theta)}{\Vol(\Om_\theta)^\frac{d+2}{d}}$ where $T(\Om_\theta)$ is computed using either the method of fundamental solutions or the mesh free Galerkin method. While the former is expected to be extremely fast and accurate, it doesn't generalize well to other problems with non-constant source term. On the contrary, the latter is expected to be slower due to the fact that it is not taylor to this particular problem; the tradeoff being that it is able to treat a large variety of problems. 

Since we know that the optimal shape is the ball, we can measure the discrepency between the shape $\Om_\theta$ and $B$ using the \textit{isoperimetric deficit} \cite{fusco2008sharp}, defined as 
\[
    D(\Om_\theta) = c_d \frac{\Per(\Om_\theta)}{\Vol(\Om_\theta)^\frac{d-1}{d}} - 1
\]
with $c_d = (d \Vol(B)^{1/d})^{-1}$. The smallest the isoperimetric deficit, the closest $\Om_\theta$ is from the ball. 

In order to give the best advantage to the PINN approach, we first perform a hyperparameter search for the PINN-based method. The tested parameters are given in \cref{tab:hyper}, along with the two best configurations, that we picked in our final experiment. The number of points used for integral evaluation is the same across all experiments ($n = 20000$). The evaluation of the perimeter and volume for the computation of the isoperimetric deficit is made using $n=100000$ points. On the $72$ possible choices of parameters, only $3$ reached the precision of $10^{-5}$ eventually; we decided to keep the three of them to compare with the classical methods. These configurations are decribed in \cref{tab:hyper}.

\begin{table}[h]
    \centering
    \caption{Hyperparameter search space (left) and best configurations that are kept for the final experiment.}
    \begin{tabular}{l c c c c }
        \toprule
        Hyperparameter & Values & Config 1 & Config 2 & Config 3 \\
        \midrule
        Neurons per layer & $\{32, 64\}$ & $32$ & $32$ & $32$\\
        Depth & $\{2,3,4\}$ & $3$ & $3$ & $2$\\
        Activation Function & \{ sin, tanh \} & tanh & tanh & tanh\\
        Optimizer & \{Adam, L--BFGS\} & L--BFGS & Adam & Adam\\
        Learning rate & $\{10^{-1}, 10^{-2}, 10^{-3} \}$ & $10^{-2}$ & $10^{-2}$ & $10^{-2}$ \\
        \bottomrule
    \end{tabular}
    \label{tab:hyper}
\end{table}

In \cref{fig:comparison}, we show the comparison between the two classical methods and the three PINN-based ones, both on CPU (AMD Ryzen Pro 7) and on GPU (Nvidia L40). We see that the classical methods consistently outperforms the PINN-based one; interestingly enough, both classical methods ran on CPU are still faster and more accurate than the PINN-based methods ran on GPU. This experiment show that, even in the most favorable setting, the PINN approach is outperformed by the classical methods by several orders of magnitude.

\begin{figure}
    \centering
    \includegraphics[width=0.48\linewidth]{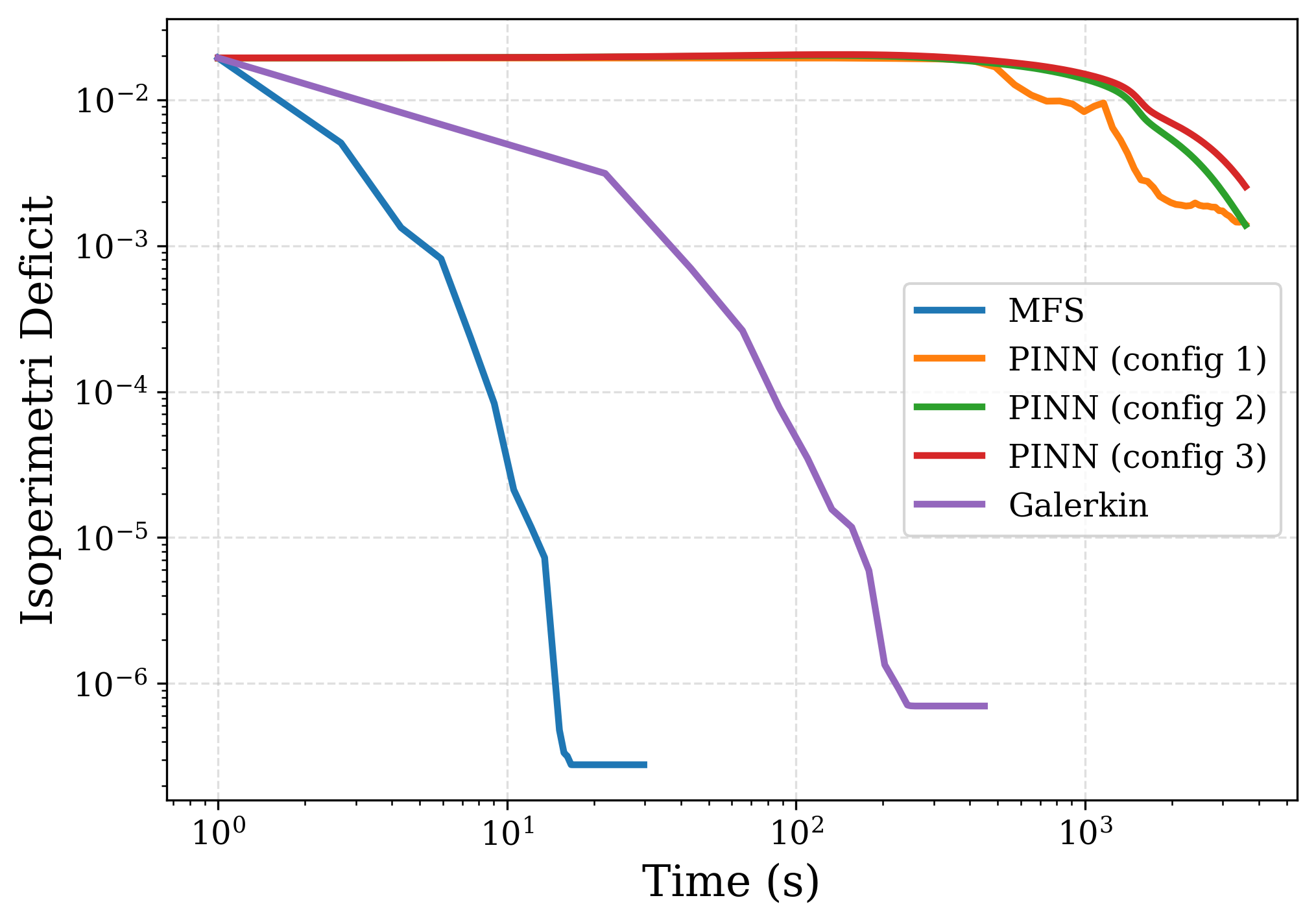}
    \hfill
    \includegraphics[width=0.48\linewidth]{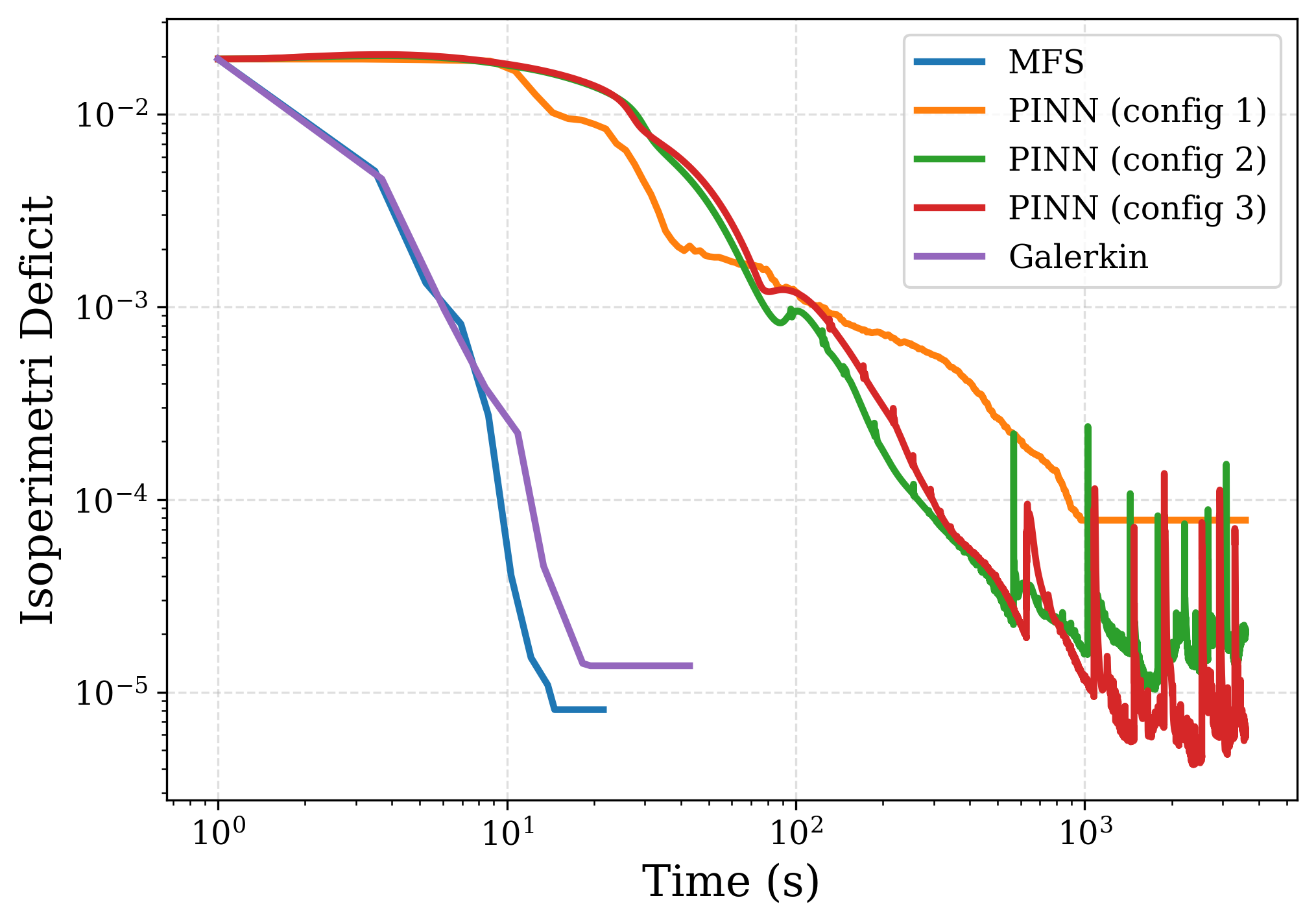}
    \caption{Runtime vs accuracy comparison between classical solvers and PINN-based approaches (CPU/GPU). Classical methods achieve higher accuracy at significantly lower computational cost.}
    \label{fig:comparison}
\end{figure}

\section{Minimization of the Mahler volume}
\label{sec:mahler}

Gauge and support functions are related to each other through the notion of \textit{polar body}. The polar body of a convex set $K \in \mathcal{K}$ is defined as $K^\circ := \left\{ x \in \R^d : x \cdot y \leq 1 \text{ for all } y \in K \right\}$. Then, the gauge function of $K$ is the support function of $K^\circ$ (see \cite[Lemma 1.7.13]{schneider}), i.e. $g_{K} = h_{K^\circ}$.

In this section, we are interested in the minimization of the \textit{Mahler volume} which is the product of the volumes of a convex $\Om$ and its polar body $\Om^\circ$, i.e. $\Vol_M(\Om) := \Vol(\Om) \Vol(\Om^\circ)$. Our framework allows us to easily compute both volumes using simultaneously the support and the gauge parametrization.

In $\R^d$, it is known that the convex set with $n$-fold rotational symmetry that minimizes the Mahler volume is the regular polygon, and the optimal value is $n^2 \sin(\pi/n)^2$ \cite{boroczky2013volume}. Using our parametrization of symmetric convex sets, we get the results presented in \cref{fig:mahler_2d}. As you can see, our parametrization is flexible enough to recover polygons.

\begin{figure}
    \centering
    \includegraphics[width=0.19\linewidth]{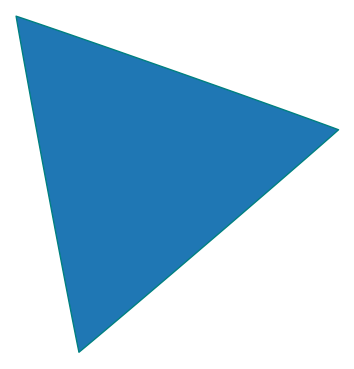}
    \includegraphics[width=0.19\linewidth]{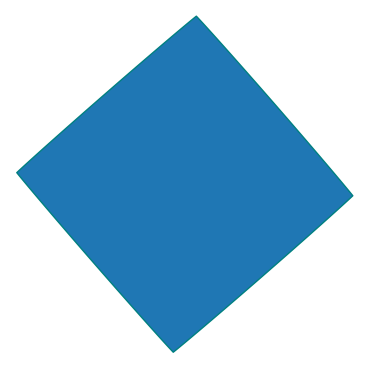}
    \includegraphics[width=0.19\linewidth]{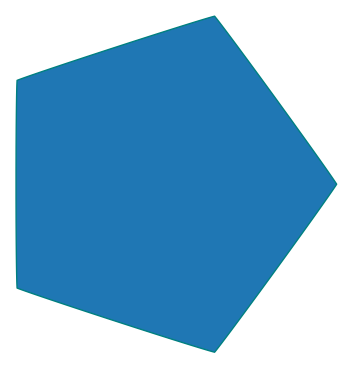}
    \includegraphics[width=0.19\linewidth]{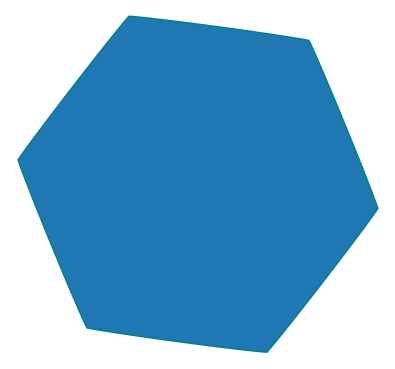}
    \includegraphics[width=0.19\linewidth]{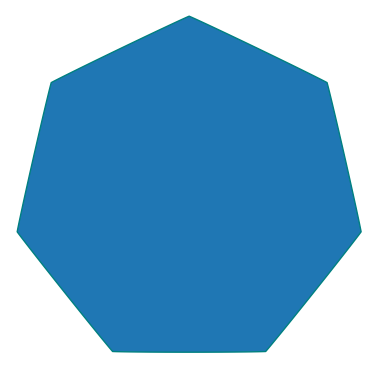}
    \caption{Minimizers of the Mahler volume under $n$-fold symmetry constraints. The method recovers polygonal structures, highlighting its ability to represent non-smooth convex shapes.}
    \label{fig:mahler_2d}
\end{figure}

\section{Statistical Analysis}
\label{seq:fit_statistic}

We complement the experiments of \cref{subseq:fit} with a statistical evaluation of robustness to noise. For each configuration (shape and noise level), we perform the following procedure. We randomly initialize the sublinear network $p_\theta$, sample $n=1000$ points ${y_i}_{i=1}^n$, and optimize the loss defined in \cref{subseq:fit}.

To quantify reconstruction quality, we measure the $L^2$ discrepancy between the learned function $p_\theta$ and the gauge function of the target shape $g_{\text{target}}$. Since $g_{\text{target}} = 1$ on $\partial \Omega_{\text{target}}$, this reduces to
\[
    \text{Acc}(\theta) =  \|g_\text{target} - p_\theta \|_{L^2(\partial \Om_\text{target})} = \|p_\theta - 1 \|_{L^2(\partial \Om_\text{target})} 
\]
We approximate this quantity by sampling $10^5$ points uniformly on $\partial B$ and mapping them via $\phi_{\text{target}}$.

Each experiment is repeated $100$ times with independent random initializations and samples. We report the median performance together with the interquartile range (25th–75th percentiles). The aggregated results are shown in \cref{fig:statistic}.

\begin{figure}
    \centering
    \includegraphics[width=0.7\linewidth]{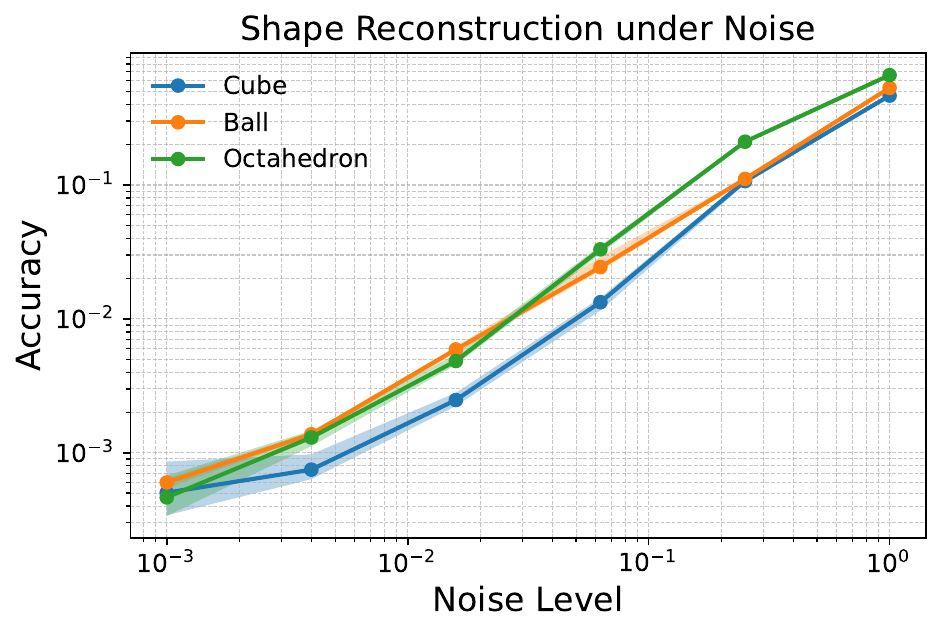}
    \caption{Robustness to noise and network initialization across 100 runs. Median reconstruction error with interquartile range. The method exhibits low variance and stable performance across noise levels.}
    \label{fig:statistic}
\end{figure}

In a second experiment, we analyze the influence of the number of samples on the reconstruction. We fix the noise level to $\sigma = 0.01$ and use different amount of points $n$ ranging from $10$ to $10^4$. The results are reported in \cref{fig:statistic_number_of_samples}.
\begin{figure}
    \centering
    \includegraphics[width=0.7\linewidth]{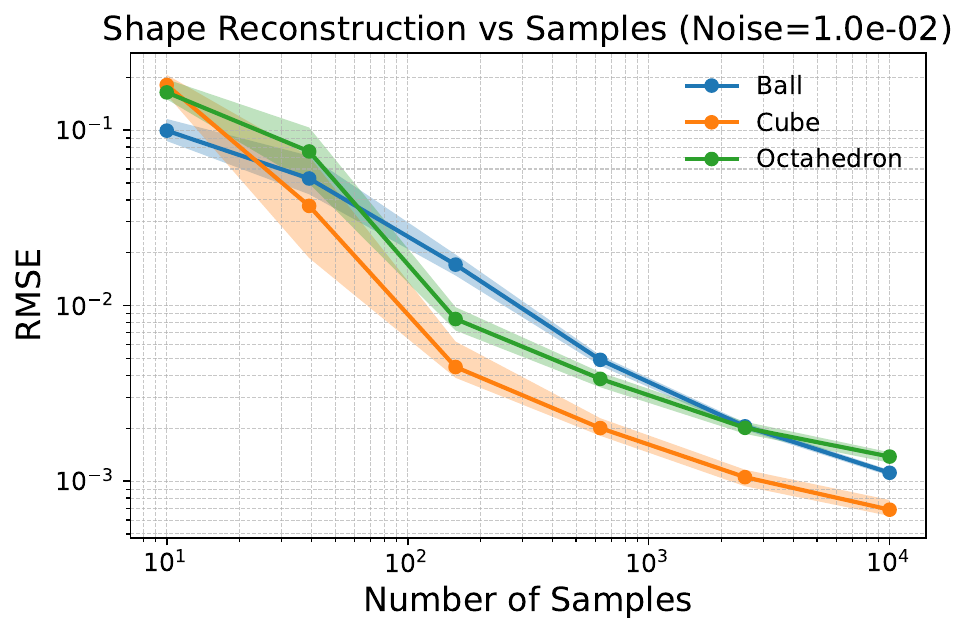}
    \caption{Robustness to amounts of samples and network initialization across 100 runs. Median reconstruction error with interquartile ranges. As expected, the variance is higher when using fewer samples.}
    \label{fig:statistic_number_of_samples}
\end{figure}

\section{Integration of the finite element method}
\label{seq:fem}

Assume that we want to minimize a certain shape function $J$, depending on a PDE that we would like to solve using the finite element method. While possible, it requires to compute certain derivatives manually.

Defining $j(\theta) := J(\Om_\theta)$, we want to relate the derivatives $\partial_{\theta_k} j(\theta)$ with the shape derivative $dJ(\Om_\theta).V$. According to \cite[Chapter 5, Section 5.9]{HPb}, under some mild regularity assumptions, shape derivatives can be put in the form 
\begin{equation}
    \label{eq:hadamard}
    dJ(\Om).V = \int_\dOm f(x) V(x)\cdot n_\Om(x)
\end{equation}
where $f : \dOm \to \R$ depends on $\Om$. This formula is particularly well suited when one needs to compute $f$ \textit{via} a mesh-based. Now, let $\theta = (\theta_1, \dots, \theta_n) \in \R^d$ and define $\bar \theta(t) := (\theta_1, \dots, \theta_k +t, \dots, \theta_n)$. Following \cite[Chapter 5, Section 5.2]{HPb}, define 
\[
    \Phi(t)(x) = \phi_{\bar\theta(t)}\circ \phi_\theta^{-1}(x)
\]
for $t\in \R$ and $x \in \Om_\theta$. We have $\Phi(t)(\Om_\theta) = \phi_{\bar\theta(t)}\circ \phi_\theta^{-1}(\Om_\theta) = \phi_{\bar\theta(t)}\circ \phi_\theta^{-1}\circ \phi_\theta(B) = \Om_{\bar\theta(t)}$. Hence, by letting $V := \Phi'(0)$, we have by definition of the shape derivative that
\[
    dJ(\Om_\theta).V = \lim_{t \to 0} \frac{J(\Om_{\bar\theta(t)}) - J(\Om_\theta)}{t} = \lim_{t \to 0} \frac{j(\bar\theta(t)) - j(\theta)}{t} = \partial_{\theta_k}j(\theta).
\]
On the other hand, 
\[
    V(x) = \Phi'(0)(x) = \left. \frac{d \phi_{\bar\theta(t)}\circ \phi_\theta^{-1}(x)}{dt}\right|_{t=0} = \partial_{\theta_k} \phi_\theta \left( \phi_\theta^{-1}(x) \right),
\]
meaning that formally, we have
\begin{equation}
    \label{eq:parametric_derivative}
    \partial_{\theta_k}j(\theta) = \int_{\dOm_\theta} f(x) \partial_{\theta_k} \phi_\theta \left( \phi_\theta^{-1}(x) \right) \cdot n_{\Om_\theta}(x) dx.
\end{equation}
If $\phi^{-1}_\theta$ is easily computable (which is the case, for instance, for \cref{eq:gauge_nn} of inverse $\phi^{-1}_\theta(y) = \frac{p_\theta(y)}{\|y\|} y$) and one can precisely evaluate the boundary integral (for instance, if we have a mesh) then this integral can be easily computed. Otherwise, one can formulate it on the reference domain:
\begin{align*}
    \partial_{\theta_k}j(\theta)
    &= \int_{\partial B} f(\phi_\theta(x)) \partial_{\theta_k} \phi_\theta \left( \phi_\theta^{-1}(\phi_\theta(x)) \right) \cdot n_{\Om_\theta}(\phi_\theta(x)) \Jac_{\partial B}(\phi_\theta(x)) dx\\
    &= \int_{\partial B} \left(f\circ \phi_\theta\right) \partial_{\theta_k} \phi_\theta \cdot \frac{\left(D\phi_\theta\right)^{-T} n_B}{\left|\left(D\phi_\theta\right)^{-T} n_B\right|}\Jac_{\partial B}(\phi_\theta)\\
    &= \int_{\partial B} \left(f\circ \phi_\theta\right) \partial_{\theta_k} \phi_\theta \cdot \left(D\phi_\theta\right)^{-T} n_B\Jac(\phi_\theta)
\end{align*}

An example of optimal sets using this method can be found in \cref{fig:convex_dirichlet}, where we minimize the first $6$ Dirichlet eigenvalues under volume and convexity constraint. Specifically, we solve:
\begin{equation}
    \min_{\Vol(\Om)=1} \lambda_k(\Om) 
\end{equation}
where $\Om \in \mathcal{K}$ and $\lambda_k(\Om)$ is the $k$-th Dirichlet eigenvalue, i.e. it solves
\begin{equation}
    \begin{cases}
        -\Delta u = \lambda_k(\Om) u &\mbox{ in } \Om\\
        u = 0   &\mbox{ on } \dOm
    \end{cases}
\end{equation}
for some $u \in H^1_0(\Om)$. This particular problem has already been considered in \cite{Antunes2022May}.

In order to compute the derivatives \cref{eq:parametric_derivative} of $\lambda_k$, we created a mesh of the domain $\Om_\theta$ using \verb|gmsh| \cite{geuzaine2009gmsh}, by giving it a sequence of points $y_i = \phi_\theta(x_i)$ where $x_i = (\cos(2i\pi/n), \sin(2i\pi/n))^T$, $1 \leq i < n$. An eigenpair $\lambda(\Om_\theta),u(\Om_\theta)$ is computed \textit{via} finite elements using \verb|scikit-fem| \cite{gustafsson2020scikit}. Using the Hadamard expression for the shape derivative $d\lambda(\Om).V = \int_\dOm |\nabla u|^2 (V \cdot n)$, we deduce that
\[
    \partial_{\theta_k}j(\theta) = \int_{\dOm_\theta} |\nabla u(x)|^2 \partial_{\theta_k} \phi_\theta \left( \phi_\theta^{-1}(x) \right) \cdot n_{\Om_\theta}(x) dx.
\]
The differential quantity $\nabla u(x)$ is computed on the finite element space, while $\partial_{\theta_k} \phi_\theta$ is computed using AD. The integral is computed on the boundary mesh by \verb|scikit-fem|. All of this process is wrapped in  \verb|torch.autograd.Function| in order to seamlessly integrate it into PyTorch's AD.

 You can see the six optimal shapes found by the algorithm in \cref{fig:convex_dirichlet} along with the corresponding eigenvalue and the relative error $E_\text{rel} := \frac{\lambda_k - \lambda_k^*}{\lambda_k^*}$ where $\lambda_k^*$ is either the optimal value obtained numerically in \cite{Antunes2022May} or the analytical one for $k=1$ and $k=3$ where the optimal shape is known to be the ball. We see that the shapes are in good agreement with the previous results, and that the relative error is of order $-3$ to $-4$.

\begin{figure}[h!]
    \centering
    \begin{subfigure}{0.26\textwidth}
        \centering
        \includegraphics[width=\linewidth]{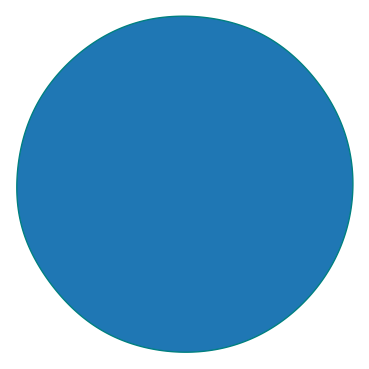}
        \caption*{$\lambda_1 = 18.1686$\\$E_{\text{rel}} = 7.7e-4$}
    \end{subfigure}
    \begin{subfigure}{0.26\textwidth}
        \centering
        \includegraphics[width=\linewidth]{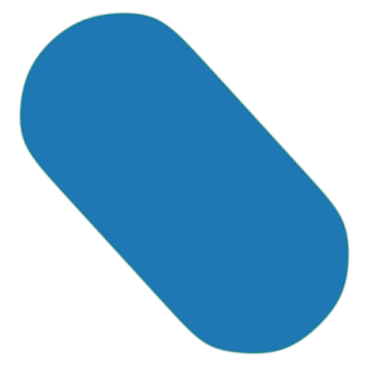}
        \caption*{$\lambda_2 = 38.0047$\\$E_{\text{rel}} = 1.2e-4$}
    \end{subfigure}
    \begin{subfigure}{0.26\textwidth}
        \centering
        \includegraphics[width=\linewidth]{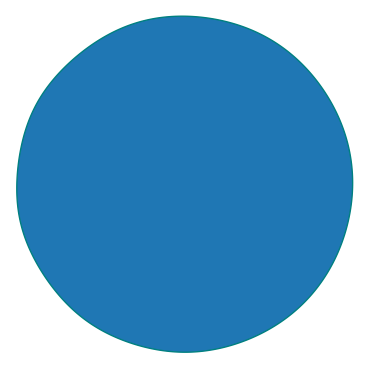}
        \caption*{$\lambda_3 = 46.1288$\\$E_{\text{rel}} = 2.3e-4$}
    \end{subfigure}
    \begin{subfigure}{0.26\textwidth}
        \centering
        \includegraphics[width=\linewidth]{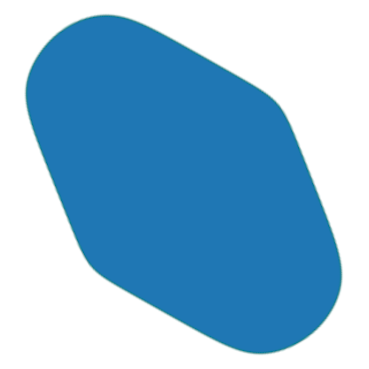}
        \caption*{$\lambda_4 = 65.5689$\\$E_{\text{rel}} = 4.8e-3$}
    \end{subfigure}
    \begin{subfigure}{0.26\textwidth}
        \centering
        \includegraphics[width=\linewidth]{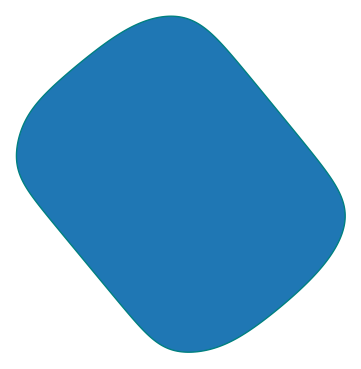}
        \caption*{$\lambda_5 = 80.0961$\\$E_{\text{rel}} = 4.8e-3$}
    \end{subfigure}
    \begin{subfigure}{0.26\textwidth}
        \centering
        \includegraphics[width=\linewidth]{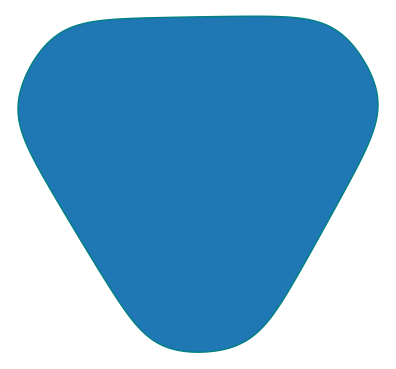}
        \caption*{$\lambda_6 = 88.9726$\\$E_{\text{rel}} = 4.9e-3$}
    \end{subfigure}
    
    \caption{\label{fig:convex_dirichlet} Optimal shapes for the first six Dirichlet eigenvalues}
\end{figure}


\end{document}